\pgfplotsset{compat=newest,tick label style={font=\scriptsize}}
\setlist[enumerate,1]{label = \upshape(\roman*), ref = (\arabic*)}
\theoremstyle{plain}
\newtheorem{theorem}{Theorem}[section]
\newtheorem{proposition}[theorem]{Proposition}
\newtheorem{lemma}[theorem]{Lemma}
\newtheorem{corollary}[theorem]{Corollary}
\theoremstyle{definition}
\newtheorem{remark}[theorem]{Remark}
\newtheorem{example}[theorem]{Example}
\numberwithin{equation}{section}
\newenvironment{case}[1]{%
  \caseinner
}{\endcaseinner}
\newcommand{\myeqref}[1]{Eqn.~\eqref{#1}}
\newcommand{\D}{\mathbb{D}}
\newcommand{\N}{\mathbb{N}}
\newcommand{\C}{\mathbb{C}}
\newcommand{\DI}{\mathcal{D}}
\newcommand{\CO}{C_\varphi}
\newcommand{\CDO}{D_\varphi}
\newcommand{\Ph}{\varphi}
\newcommand{\SI}{\sigma}
\newcommand{\bigchi}{\mbox{\Large$\chi$}}
\title{Composition-Differentiation Operators on the Dirichlet Space}
\keywords{Composition operators, Differentiation.}
\subjclass[2020]{primary: 47B33}
\author{Robert F.~Allen\textsuperscript{1}, Katherine C.~Heller\textsuperscript{2}, and Matthew A.~Pons\textsuperscript{2}}
\address{\textsuperscript{1}Department of Mathematics \& Statistics, University of Wisconsin-La Crosse}
\address{\textsuperscript{2}Department of Mathematics \& Actuarial Science, North Central College}
\email{rallen@@uwlax.edu, kheller@noctrl.edu, mapons@noctrl.edu}
\begin{document}

\begin{abstract} 
We investigate composition-differentiation operators acting on the Dirichlet space of the unit disk.   Specifically, we determine characterizations for bounded, compact, and Hilbert-Schmidt composition-differentiation operators.  In addition, for particular classes of inducing maps, we derive an adjoint formula, compute the norm, and identify the spectrum.
\end{abstract}

\maketitle

\section{Introduction}

For a space $\mathcal{X}$ of functions analytic on the unit disk and for a self-map  $\Ph$ of the unit disk, we define the composition operator $\CO$ on $\mathcal{X}$ by $\CO(f)=f\circ\Ph$. The study of such operators has been ongoing for fifty years. Recently, a new variation of this operator has come under investigation.  Defining $D$ to be the operator of differentiation, several researchers have undertaken a study of the two operators $D\CO$ and $\CO D$ and we point to \cite{FatehiHammond:2020}, \cite{HibschweilerPortnoy:2005}, \cite{Ohno:2006}, and \cite{Ohno:2009}.  These studies have focused on the Hardy, Bergman, and Bloch spaces.  Our goal is to extend parts of this study to the Dirichlet space of the unit disk.

On many of the spaces mentioned above, the operator $D$ is unbounded and this behavior persists on the Dirichlet space. As a result of this, we adopt the convention from \cite{FatehiHammond:2020} and define the composition-differentiation operator $\CDO(f)=\CO D(f)=f'\circ\Ph.$ Throughout the sections that follow, we will examine when these operators are bounded, compact, and Hilbert-Schmidt on the Dirichlet space.  We then restrict our attention to two specific classes of symbols.  For linear-fractional maps with $\|\Ph\|_{\infty}<1$, we examine the adjoint of $\CDO$.  For maps induced by monomials, $\Ph(z)=az^M$ where $|a|<1$ and $M\in\N$, we determine the norm and spectrum of $\CDO$.  We also point to instances where our methods will extend to other spaces of interest.

In \cite{Ohno:2014}, the author studies composition operators from the Hardy space into the Dirichlet space and when such operators are bounded, compact, and Hilbert-Schmidt.  One byproduct of our results extends this work to composition operators from the Bergman space into the Dirichlet space.

\section{Preliminaries}

Let $\D=\{z\in\C:|z|<1\}$ be the open unit disk in the complex plane and $H(\D)$ be the set of all functions analytic on $\D$.  A function $f\in H(\D)$ is in the Dirichlet space $\DI$ provided $f$ has a finite Dirichlet integral, i.e. \[\int_{\D}|f'(z)|^2\,dA(z)<\infty,\] where $dA$ is normalized Lebesgue area measure. Occasionally we will use the symbol $\|f\|_{\DI_0}$ to denote the Dirichlet integral of $f$.  If $f(z)=\sum_{n=0}^{\infty} a_nz^n$ is the power series representation for $f$, then \[\begin{aligned}\|f\|_{\DI}^2&=|f(0)|^2+\int_\D|f'(z)|^2\,dA(z)\\
&=|a_0|^2+\sum_{n=1}^{\infty}n|a_n|^2.\\
\end{aligned}\] From this it is clear that we can view the Dirichlet space as a weighted Hardy space given by the weight sequence $\beta(0)=1$ and $\beta(n)=n^{1/2}$ for $n\geq 1$; see \cite[Chapter 2]{CowenMacCluer:1995} for more details. The generating function for this weight sequence is given by \[k(z)=\sum_{n=0}^{\infty}\frac{z^n}{\beta(n)^2}=1+\log\frac{1}{1-z}\] and hence the reproducing kernel function for the Dirichlet space (with norm and inner product induced by the given weight sequence) is \[K_w(z)=k(\overline{w}z)=1+\log\frac{1}{1-\overline{w}z}.\] With this, for each $f\in\DI$, we have \[f(w)=\langle f,K_w\rangle_{\DI},\] where \[\begin{aligned}\langle f,g\rangle_{\DI}&=f(0)\overline{g(0)}+\int_{\D}f'(z)\overline{g'(z)}\,dA(z)\\
&=a_0\overline{b_0}+\sum_{n=1}^{\infty}na_n\overline{b_n}.
\end{aligned}\]  Furthermore, the kernel function for evaluation of the first derivative is given by \[K_w^{(1)}(z)=\frac{d}{d\overline{w}}k(\overline{w}z)=\frac{z}{1-\overline{w}z}\] and \[f'(w)=\langle f,K_w^{(1)}\rangle_{\DI} \] for each $f\in \DI$.

Also recall the Bergman space of the disk $A^2$ is defined to be those functions analytic in the disk with $$\int_\D|f(z)|^2\,dA(z)<\infty.$$ Thus $f\in\DI$ if and only if $f'\in A^2$. The Bergman space is a functional Hilbert space and thus point evaluations are bounded (\cite[Chapter 2]{CowenMacCluer:1995}).

As we study composition-differentiation operators on $\DI$, we will need to employ a specific counting function. If $\Ph$ is self-map of $\D$, for $w\in\D$, we set $n_{\Ph}(w)$ to be the cardinality of the set $\{\Ph^{-1}(w)\}$; we note that $n_{\Ph}(w)=0$ for $w\not\in\Ph(\D)$.  By making a change of variables and using the fact that $\CDO(f)=f'\circ\Ph$ we see that \begin{equation}\label{eqn:countingfunctioninnorm}\begin{aligned}\|\CDO f\|_{\DI}^2&=|f'(\Ph(0))|^2+\int_\D|(f'\circ\Ph)'(z)|^2\,dA(z)\\
&=|f'(\Ph(0))|^2+\int_\D|f''(\Ph(z))|^2|\Ph'(z)|^2\,dA(z)\\\
&=|f'(\Ph(0))|^2+\int_\D|f''(w)|^2n_{\Ph(w)}\,dA(w).\\
\end{aligned}\end{equation}

To characterize bounded and compact composition-differentiation operators, we will employ Carleson measures. For $0\leq\theta<2\pi$ and $0<h<1$, define the standard Carleson set $S(\theta,h)$ by \[S(\theta,h)=\{z\in\D: |z-e^{i\theta}|<h\}.\] Our results center around these sets, but one can verify equivalent results using Carleson rectangles or pseudohyperbolic disks. 

For $a\in\D$, recall the involution automorphism $\Ph_a$ given by \[\Ph_a(z)=\frac{a-z}{1-\overline{a}z}\] satisfies \[|\Ph_a'(z)|=\frac{1-|a|^2}{|1-\overline{a}z|^2}.\] These maps are bijections of the disk onto itself and are self-inverses. More generally, we will consider linear fractional self-maps of the disk, i.e. maps of the form \begin{equation}\label{eqn:lfphi}\Ph(z)=\frac{az+b}{cz+d}\end{equation} that map the disk into itself. We point to \cite[Chapter 0]{Shapiro:1991} for more details. We will only be interested in non-constant maps of the disk into itself, and for maps of the form above, it is necessary that $|cz+d|$ be bounded away from zero throughout the disk. Additionally, we define the companion map \begin{equation}\label{eqn:lfsigma}\sigma_{\Ph}(z)=\sigma(z)=\frac{\overline{a}z-\overline{c}}{-\overline{b}z+\overline{d}}.\end{equation} It is simple to show that $\sigma$ is also a self-map of the disk whenever $\Ph$ is.  This companion map occurs frequently when considering the adjoint of a composition operator with linear fractional symbol acting on a variety of spaces and the interested reader is encouraged to consider \cite{Cowen:1988,Hurst:1997,GallardoRodriguez:2003,Heller:2012}

Finally, for two quantities $A(x)$ and $B(x)$, we say that $A(x)\cong B(x)$ to mean there exist positive constants $C_1$ and $C_2$ such that $C_1 A(x)\leq B(x)\leq C_2A(x)$ for all $x$ in some specified range.

\section{Bounded Composition-Differentiation Operators on $\DI$}

We begin with a result that is often referenced without proof but we include a proof for the sake of completeness. The result for $T=\CO$ appears as Exercise 1.1.1 in \cite{CowenMacCluer:1995}.

\begin{proposition}\label{prop:boundedinto}
Suppose $\mathcal{X}$ and $\mathcal{Y}$ are functional Banach spaces of analytic functions defined on $\D$.  Let $\Ph$ be a self-map of $\D$ and let $T$ be either $\CO$ or $\CDO$. Then $T$ is bounded from $\mathcal{X}$ to $\mathcal{Y}$ if and only if $T$ maps $\mathcal{X}$ into $\mathcal{Y}$.
\end{proposition}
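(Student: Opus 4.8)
The plan is to invoke the Closed Graph Theorem. One direction is immediate: if $T$ is bounded from $\mathcal{X}$ to $\mathcal{Y}$, then in particular $Tf\in\mathcal{Y}$ for every $f\in\mathcal{X}$, so $T$ maps $\mathcal{X}$ into $\mathcal{Y}$. For the converse I would assume that $T$ maps $\mathcal{X}$ into $\mathcal{Y}$ and show that the graph of $T$ is closed; since $\mathcal{X}$ and $\mathcal{Y}$ are Banach spaces, boundedness then follows. Concretely, I would take a sequence with $f_n\to f$ in $\mathcal{X}$ and $Tf_n\to g$ in $\mathcal{Y}$ and argue that $g=Tf$.

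The first ingredient is that norm convergence in a functional Banach space of analytic functions forces pointwise convergence: because each evaluation functional $f\mapsto f(w)$ is bounded, $\|f_n-f\|_{\mathcal{X}}\to 0$ gives $f_n(w)\to f(w)$ for every $w\in\D$, and likewise $Tf_n\to g$ pointwise on $\D$. For $T=\CO$ this already suffices, since $\Ph(z)\in\D$ yields $(\CO f_n)(z)=f_n(\Ph(z))\to f(\Ph(z))=(\CO f)(z)$ for each $z$, and comparing with the pointwise limit $g$ shows $g=\CO f$.

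The case $T=\CDO$ is the main obstacle, because mere pointwise convergence of the $f_n$ does not obviously control the derivatives $f_n'$. To handle it I would upgrade pointwise convergence to local uniform convergence. Fix a compact $K\subset\D$. Each $f\in\mathcal{X}$ is analytic, hence continuous, so $\sup_{w\in K}|f(w)|<\infty$; thus the family of evaluation functionals $\{e_w:w\in K\}$ is pointwise bounded on $\mathcal{X}$, and the Uniform Boundedness Principle gives $M_K:=\sup_{w\in K}\|e_w\|<\infty$. Consequently the convergent, hence bounded, sequence $\{f_n\}$ satisfies $\sup_n\sup_{w\in K}|f_n(w)|\leq M_K\sup_n\|f_n\|_{\mathcal{X}}<\infty$, so $\{f_n\}$ is uniformly bounded on compact subsets of $\D$. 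By Montel's theorem it is a normal family, and since it already converges pointwise to $f$, it converges to $f$ uniformly on compact subsets. Weierstrass's theorem then yields $f_n'\to f'$ uniformly on compact subsets, so in particular $(\CDO f_n)(z)=f_n'(\Ph(z))\to f'(\Ph(z))=(\CDO f)(z)$ for each $z$.

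In either case I reach $g(z)=(Tf)(z)$ for all $z\in\D$. Since elements of $\mathcal{Y}$ are genuine functions and equal functions are equal as elements, this gives $g=Tf$, where $Tf\in\mathcal{Y}$ by hypothesis. Hence the graph of $T$ is closed and $T$ is bounded. The crux of the argument is precisely that controlling $\CDO$ requires local uniform convergence of the $f_n$ rather than bare pointwise convergence, and the Uniform Boundedness Principle together with normal-family compactness is exactly what supplies it.
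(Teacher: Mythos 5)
Your proof is correct, and it follows the same overall strategy as the paper: the forward direction is immediate, and the converse is the Closed Graph Theorem applied to a sequence with $f_n\to f$ in $\mathcal{X}$ and $Tf_n\to g$ in $\mathcal{Y}$. The one place where you genuinely diverge is the key continuity step for $T=\CDO$. The paper writes $(\CDO f_n)(x)=K_{\Ph(x)}^{(1)}(f_n)$ and simply invokes ``the continuity of the evaluation functional'' to conclude $f_n'(\Ph(x))\to f'(\Ph(x))$; in a general functional Banach space only \emph{point} evaluation is assumed bounded, so the boundedness of the derivative-evaluation functional is being taken for granted there. You instead derive it: the Uniform Boundedness Principle turns the pointwise boundedness of $\{e_w:w\in K\}$ into a uniform bound, so the norm-bounded sequence $(f_n)$ is locally uniformly bounded, Montel plus pointwise convergence upgrades this to local uniform convergence, and Weierstrass then gives $f_n'\to f'$ locally uniformly. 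This buys a fully self-contained argument valid for arbitrary functional Banach spaces of analytic functions, at the cost of a few extra lines; the paper's version is shorter but leans on a fact about derivative evaluation that, in this generality, deserves exactly the justification you supplied.
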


\begin{proof}
The necessity is clear and we will verify sufficiency for $\CDO$. Suppose $\CDO$ maps $\mathcal{X}$ into $\mathcal{Y}$. To conclude that $\CDO$ is bounded, we will appeal to the Closed Graph Theorem.  Let $(f_n)$ be a sequence in $\mathcal{X}$ converging in norm to $f\in\mathcal{X}$.	Also suppose $(\CDO f_n)$ converges in norm to $g\in\mathcal{Y}$. For $x\in \D$, it is clear that \[(\CDO f_n)(x)=f_n'(\Ph(x))=K_{\Ph(x)}^{(1)}(f_n).\] Also, our assumptions on $(f_n)$ combined with the continuity of the evaluation functional guarantee that $\left(K_{\Ph(x)}^{(1)}(f_n)\right)$ converges to $K_{\Ph(x)}^{(1)}(f)$.  Thus, for each $x\in \D$, we have that $((\CDO f_n)(x))$ converges to \[K_{\Ph(x)}^{(1)}(f)=f'(\Ph(x))=(\CDO f)(x).\]

On the other hand, in a functional Banach space, norm convergence implies point-wise convergence and so, for $x\in\D$, we see that $((\CDO f_n)(x))$ converges to $g(x)$ and thus $(\CDO f)(x)=g(x)$ for every $x\in \D$ or $\CDO f=g$.  Therefore we conclude that $\CDO$ is bounded by the Closed Graph Theorem.
\end{proof}

\begin{theorem}
Let $\Ph$ be an analytic self-map of $\D$ and set $d\mu=n_{\Ph}dA$. The following are equivalent.

\begin{enumerate}
\item $D_\Ph:\DI\to \DI$ is bounded.
\item $\CO:A^2\to\DI$ is bounded.
\item There is a constant $C_1>0$ such that $\mu(S(\theta,h))\leq C_1h^4$ for all $0\leq\theta<2\pi$ and $0<h<1$.
\item There is a constant $C_2>0$ such that \[\int_{\D}|\Ph_a'(w)|^4\,d\mu(w)\leq C_2\] for all $a\in\D$.
\end{enumerate}
\end{theorem}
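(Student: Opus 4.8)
The plan is to obtain (i)$\Leftrightarrow$(ii) from an operator factorization and then to prove (ii), (iii), (iv) equivalent by running the cycle (ii)$\Rightarrow$(iv)$\Rightarrow$(iii)$\Rightarrow$(ii). First I would note that differentiation $D\colon\DI\to A^2$, $Df=f'$, is bounded and restricts to an isometric isomorphism of $\{f\in\DI:f(0)=0\}$ onto $A^2$, since $\|f\|_{\DI}^2=|f(0)|^2+\|f'\|_{A^2}^2$. As $\CDO f=f'\circ\Ph=\CO(f')$, we have the factorization $\CDO=\CO D$. If $\CO\colon A^2\to\DI$ is bounded then $\CDO$ is a composition of bounded maps; conversely, if $\CDO$ is bounded then for $g\in A^2$ one takes the antiderivative $f$ with $f(0)=0$, so that $\|f\|_{\DI}=\|g\|_{A^2}$ and $\CO g=\CDO f$, giving $\|\CO g\|_{\DI}\le\|\CDO\|\,\|g\|_{A^2}$. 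Hence (i)$\Leftrightarrow$(ii), and in fact $\|\CDO\|=\|\CO\|$.

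Next, rewriting \eqref{eqn:countingfunctioninnorm} as $\|\CO g\|_{\DI}^2=|g(\Ph(0))|^2+\int_{\D}|g'(w)|^2\,d\mu(w)$ and observing that the point-evaluation term is automatically controlled by $\|g\|_{A^2}^2$ (point evaluations on $A^2$ are bounded and $\Ph(0)$ is fixed), statement (ii) is equivalent to the embedding $\int_{\D}|g'|^2\,d\mu\le C\|g\|_{A^2}^2$ for all $g\in A^2$. For (ii)$\Rightarrow$(iv) I would test this embedding against $g_a(z)=(1-|a|^2)^2(1-\overline{a}z)^{-3}$, which satisfies $\|g_a\|_{A^2}\cong1$ and the pointwise identity $|g_a'|^2=9|a|^2|\Ph_a'|^4$; the embedding then yields $\int_{\D}|\Ph_a'|^4\,d\mu\le C|a|^{-2}$, uniform for $|a|\ge\tfrac12$, while for $|a|<\tfrac12$ one has $|\Ph_a'|\le 4$ and $\mu(\D)<\infty$ (the latter by testing $g=z$). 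For (iv)$\Rightarrow$(iii), given $S(\theta,h)$ I would take $a=(1-h)e^{i\theta}$, for which $1-|a|^2\ge h$ and $|1-\overline{a}w|\le 2h$ on $S(\theta,h)$, so $|\Ph_a'|^4\ge c\,h^{-4}$ there and therefore $\mu(S(\theta,h))\le C h^4\int_{\D}|\Ph_a'|^4\,d\mu\le C C_2 h^4$.

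The main obstacle is (iii)$\Rightarrow$(ii), namely deducing $\int_{\D}|g'|^2\,d\mu\le C\|g\|_{A^2}^2$ from $\mu(S(\theta,h))\le C_1h^4$. The crucial observation is that the exponent $4$ identifies $\mu$ as exactly a Carleson measure for the weighted Bergman space $A^2_2$ with norm $\int_{\D}|F(w)|^2(1-|w|^2)^2\,dA(w)$, since a Carleson box has $A^2_2$-mass $\cong h^4$. By the standard characterization of Carleson measures for weighted Bergman spaces, this gives $\int_{\D}|F|^2\,d\mu\le C\|F\|_{A^2_2}^2$ for all $F\in A^2_2$. Applying this with $F=g'$ and using the boundedness of differentiation from $A^2$ into $A^2_2$, namely $\int_{\D}|g'(w)|^2(1-|w|^2)^2\,dA(w)\le C\|g\|_{A^2}^2$, produces the desired embedding and closes the cycle. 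If a self-contained proof is wanted, this embedding can instead be established directly through a Whitney decomposition of $\D$ subordinate to the boxes $S(\theta,h)$ and summation of the $h^4$ bounds, which is where the technical effort would concentrate.
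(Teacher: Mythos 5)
Your proposal is correct, and while its overall architecture matches the paper's (reduce to $\CO\colon A^2\to\DI$, change variables to get $d\mu=n_\Ph\,dA$, then invoke Carleson-measure theory), it differs in how the individual equivalences are handled. For (i)$\Leftrightarrow$(ii) the paper appeals to a closed-graph-theorem proposition ($T$ bounded iff $T$ maps $\mathcal{X}$ into $\mathcal{Y}$) together with the observation $\CDO f=\CO(f')$; your direct factorization through the isometry $f\mapsto f'$ on $\{f\in\DI:f(0)=0\}$ is quantitative and yields the extra information $\|\CDO\|=\|\CO\|$, which the paper does not record. The paper then obtains (ii)$\Leftrightarrow$(iii) by citing Luecking's theorem and (iii)$\Leftrightarrow$(iv) by citing Arazy--Fisher--Peetre, whereas you run the cycle (ii)$\Rightarrow$(iv)$\Rightarrow$(iii)$\Rightarrow$(ii) and prove the first two implications by hand: your test function $g_a(z)=(1-|a|^2)^2(1-\overline{a}z)^{-3}$, with $|g_a'|^2=9|a|^2|\Ph_a'|^4$ and $\|g_a\|_{A^2}\cong 1$, gives a clean (ii)$\Rightarrow$(iv) (the paper only carries out a computation of this flavor later, in the compactness theorem, with the normalized Bergman kernel), and your choice $a=(1-h)e^{i\theta}$ for (iv)$\Rightarrow$(iii) is an elementary replacement for the Arazy--Fisher--Peetre citation. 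The one implication where both you and the paper lean on an external result is the hard embedding (iii)$\Rightarrow$(ii): the paper cites Luecking directly, while you factor it as ``$\mu$ is a Carleson measure for $A^2_2$'' plus the derivative embedding $\|g'\|_{A^2_2}\lesssim\|g\|_{A^2}$ --- an equivalent amount of imported machinery, though your decomposition makes the role of the exponent $4$ more transparent. Net effect: your route is more self-contained and more explicit on three of the four links, at the cost of a slightly longer argument; the paper's is shorter but citation-heavy.
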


\begin{proof}
The equivalence $\text{(i)} \Longleftrightarrow \text{(ii)}$ follows from Proposition \ref{prop:boundedinto} combined with the fact that $f\in\DI$ if and only if $f'\in A^2$ and the observation that $D_\Ph(f)=f'\circ\Ph=\CO(f')$.

Next we verify the equivalence between  (ii) and (iii). Let $f\in A^2$.  Then, making a change of variables, we have \[\begin{aligned}\|\CO f\|_{\DI}^2&=|f(\Ph(0))|^2+\int_\D|(f\circ\Ph)'(z)|^2\,dA(z)\\
&=|f(\Ph(0))|^2+\int_\D|f'(\Ph(z))|^2|\Ph'(z)|^2\,dA(z)\\\
&=|f(\Ph(0))|^2+\int_\D|f'(w)|^2n_{\Ph(w)}\,dA(w)\\
&=|f(\Ph(0))|^2+\int_\D|f'(w)|^2\,d\mu(w).
\end{aligned}\] As point evaluations are bounded on $A^2$, it follows that $\CO:A^2\to \DI$ is bounded if and only if there is a constant $C>0$ such that \[\int_D|f'(w)|^2\,d\mu(w)\leq C\|f\|_{A^2}^2.\] The desired result then follows from \cite[Theorem 2.2]{Luecking:1985} (and the remarks following the theorem) with $n=1$, $q=2$, $p=2$, and $\alpha=0$.  The equivalence between (iii) and (iv) is found in \cite[Theorem 13]{ArazyFisherPeetre:1985}.
\end{proof}

For $\mu$ as defined in the theorem, we note that the integral in condition (iv) of the theorem can also be written as \begin{equation}\label{eqn:involutionintegral}\int_{\D}|\Ph_a'(w)|^4\,d\mu(w)=\int_{\D}\frac{|\Ph'(z)|^2}{(1-|\Ph(z)|^2)^4}\left(\frac{(1-|a|^2)(1-|\Ph(z)|^2)}{|1-\overline{a}\Ph(z)|^2}\right)^4\,dA(z)\end{equation} by reversing the change of variables.

\begin{example}\label{exam:bounded}
An obvious consequence of the previous theorem is that the identity map $\Ph(z)=z$ does not induced a bounded composition-differentiation operator on $\DI$, nor does any automorphism of the disk. This is also true for any linear fractional self-map of the disk with $\|\Ph\|_{\infty}=1$.   However, a linear fractional map with $\|\Ph\|_{\infty}<1$ will induce a bounded composition-differentiation operator.

The following example can be found in \cite{JovovicMacCluer:1997}. Let $\Gamma_1$ be a simply connected region in $\D$ touching $\partial\D$ only at 1 and such that the boundary curve in a neighborhood of 1 is a piece of the curve (in rectangular coordinates) $x^{1/2}+y^{1/2}=1$.  Then let $\Ph$ be a univalent mapping of $\D$ onto $\Gamma_1$.  In \cite{JovovicMacCluer:1997}, the authors show that such a $\Ph$ will induce a bounded and, in fact, compact composition operator on $\DI$. However, the corresponding operator $\CDO$ is unbounded on $\DI$.  This can be verified by showing that $\mu(S(\theta,h))/h^4$ is unbounded as $h\rightarrow 0$, for $\theta=0$. We have, \[\begin{aligned}\lim_{h\rightarrow 0}\frac{1}{h^4}\int_{S(0,h)}n_{\Ph}(w)\,dA(w)&=\lim_{h\rightarrow 0}\frac{1}{h^4}A(\Gamma_1\cap S(0,h))\\
&\cong\lim_{h\rightarrow 0}\int_{1-h}^1\left(1-x^{1/2}\right)^2\,dx\\
&=\lim_{h\rightarrow 0}\frac{2h-h^2/2+(4/3)(1-h)^{3/2}-4/3}{h^4}\\
&=\infty.\end{aligned}\]

We can adjust this example to produce a $\Ph$ such that $\CDO$ is bounded on $\DI$.  Here we take $\Gamma_2$ to be defined similarly to $\Gamma_1$ but we modify so that the boundary curve in a neighborhood of 1 is a piece of the curve $x^{1/3}+y^{1/3}=1.$ Then we take $\Ph$ to be a univalent map of $\D$ onto $\Gamma_2$.  To show that $\CDO$ is bounded, it is sufficient to check that $\mu(S(0,h)/h^4$ is bounded for all $0<h<1$ since $\Gamma_2$ is contained in a non-tangential approach region at 1; see, for example, \cite[Lemma 6.2]{CowenMacCluer:1995}. We have \[\begin{aligned}\frac{1}{h^4}\int_{S(0,h)}n_{\Ph}(w)\,dA(w)&=\frac{1}{h^4}A(\Gamma_2\cap S(0,h))\\
&\cong\int_{1-h}^1\left(1-x^{1/3}\right)^3\,dx\\
&=\frac{h^2/2+(9/4)(1-h)^{4/3}-(9/5)(1-h)^{5/3}-9/20}{h^4}.\end{aligned}\] This last function is increasing on $(0,1)$ and so it is bounded above by the value at $h=1$ which is 1/20.
\end{example}

\section{Compact and Hilbert-Schmidt Composition-Differentiation Operators on $\DI$}

To characterize the compact composition-differentiation operators, we first have a familiar reformulation of compactness.

\begin{proposition}[{\cite[Lemma 3.7]{Tjani:2003}}]\label{prop:compactreformulation}
Let $\Ph$ be a self-map of $\D$. Then $\CO:A^2\to \DI$ is compact if and only if whenever $(f_n)$ is a bounded sequence in $A^2$,with $(f_n)\to 0$ uniformly on compact subsets of $\D$, it follows that $(\|\CO f_n\|_{\DI})\to 0$. Similarly, $\CDO:\DI\to\DI$ is compact if and only if whenever $(f_n)$ is a bounded sequence in $\DI$ with $(f_n)\to 0$ uniformly on compact subsets of $\D$, it follows that $(\|\CDO f_n\|_{\DI})\to 0$.
\end{proposition}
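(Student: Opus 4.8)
The plan is to prove the two equivalences in parallel, since they differ only in the choice of domain space: I would argue for a generic bounded operator $T$ (either $\CO$ acting on $A^2$ or $\CDO$ acting on $\DI$) and invoke the reproducing kernel of the \emph{domain} space at the one key step. The central observation, which I would isolate first as a preliminary remark, is that in a functional Hilbert space $\mathcal{H}$ of analytic functions on $\D$ a norm-bounded sequence $(f_n)$ converges to $0$ uniformly on compact subsets if and only if it is bounded and converges weakly to $0$. One direction uses the reproducing property $f_n(w)=\langle f_n,K_w\rangle$: uniform (hence pointwise) convergence to $0$ gives $\langle f_n,K_w\rangle\to 0$ for every $w$, and since the kernels $\{K_w:w\in\D\}$ have dense span while $(f_n)$ is norm-bounded, this yields $\langle f_n,g\rangle\to 0$ for all $g\in\mathcal{H}$. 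The reverse direction uses that a norm-bounded sequence is uniformly bounded on compact subsets (because $\|K_w\|$ is locally bounded), hence a normal family, so weak convergence to $0$ forces pointwise convergence to $0$, which a Vitali/normal-families argument upgrades to uniform convergence on compact subsets.

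With this in hand the forward implication is routine. Suppose $T$ is compact and $(f_n)$ is bounded with $f_n\to 0$ uniformly on compact subsets; by the preliminary remark $f_n\rightharpoonup 0$ weakly, and compact operators carry weakly null sequences to norm null sequences, so $\|Tf_n\|\to 0$. To ensure the conclusion holds for the full sequence rather than a subsequence, I would argue by contradiction: if $\|Tf_{n_k}\|\geq\varepsilon$ along some subsequence, that subsequence is still bounded and weakly null, contradicting compactness.

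For the converse, suppose the stated convergence condition holds and let $(f_n)$ be any bounded sequence in the domain. Boundedness in a Hilbert space yields a weakly convergent subsequence $f_{n_k}\rightharpoonup f$; setting $g_k=f_{n_k}-f$, the sequence $(g_k)$ is bounded and weakly null, so by the preliminary remark $g_k\to 0$ uniformly on compact subsets. The hypothesis then gives $\|Tg_k\|\to 0$, i.e. $Tf_{n_k}\to Tf$ in norm, so $(Tf_n)$ has a norm-convergent subsequence and $T$ is compact.

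I expect the main technical point to be the preliminary equivalence between weak convergence and bounded uniform-on-compacts convergence, specifically the normal-families step in the reverse direction; the remainder is the standard interplay between compactness and weak convergence and transfers verbatim to both operators, provided one uses the correct domain kernel at the key step (the $A^2$ kernel for $\CO$ and the Dirichlet kernel $K_w$ for $\CDO$).
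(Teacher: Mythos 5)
Your proposal is correct. Note, however, that the paper does not prove this proposition at all: it is stated as a direct citation of Lemma 3.7 of Tjani, so there is no in-paper argument to compare against. What you have written is the standard proof underlying that cited lemma: the key observation that, in a reproducing kernel Hilbert space whose kernel norms are locally bounded (true for both $A^2$ and $\DI$), a bounded sequence tends to $0$ weakly if and only if it tends to $0$ uniformly on compact subsets, followed by the equivalence of compactness with complete continuity on a Hilbert space. All the steps check out: the density of the span of the kernel functions gives the forward direction of your preliminary remark, Vitali/Montel gives the reverse, and in the converse direction the extraction of a weakly convergent subsequence $f_{n_k}\rightharpoonup f$ and the application of the hypothesis to $g_k=f_{n_k}-f$ correctly produces a norm-convergent subsequence of images. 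The only point worth making explicit is that the hypothesis of the proposition implicitly assumes the operator maps the domain into $\DI$, so that $Tf$ makes sense in the converse; your argument then also yields boundedness of $T$ as a byproduct. Your unified treatment of $\CO$ on $A^2$ and $\CDO$ on $\DI$, keyed only to the reproducing kernel of the domain space, is a clean way to package both halves of the statement at once.
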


\noindent As with boundedness, compactness of $\CDO:\DI\to \DI$ is connected to the compactness of $\CO:A^2\to\DI$.

\begin{lemma}
Let $\Ph$ be a self-map of $\D$.  Then $\CDO:\DI\to \DI$ is compact if and only if $\CO:A^2\to\DI$ is compact.
\end{lemma}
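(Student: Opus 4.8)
The plan is to exploit the two algebraic identities relating the operators, namely $\CDO f = f'\circ\Ph = \CO(f')$ and, for $g\in A^2$, $\CO g = \CDO G$ where $G$ is the antiderivative of $g$ normalized by $G(0)=0$. Since the excerpt has just recorded the sequential reformulation of compactness in Proposition \ref{prop:compactreformulation}, I would prove both implications by transferring the relevant test sequences between $\DI$ and $A^2$ through differentiation and antidifferentiation, checking at each passage that the two hypotheses of that proposition (boundedness in norm and local uniform convergence to $0$) are preserved.

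For the direction assuming $\CO:A^2\to\DI$ is compact, I would take a sequence $(f_n)$ bounded in $\DI$ with $f_n\to 0$ uniformly on compact subsets of $\D$ and set $g_n=f_n'$. Two observations are needed: first, $\|g_n\|_{A^2}=\|f_n'\|_{A^2}\le\|f_n\|_{\DI}$, so $(g_n)$ is bounded in $A^2$; second, $g_n\to 0$ uniformly on compact subsets, which follows from the Cauchy estimates, since uniform-on-compacts convergence of analytic functions passes to their derivatives. Proposition \ref{prop:compactreformulation} applied to $\CO$ then gives $\|\CO g_n\|_{\DI}\to 0$, and since $\CO g_n=f_n'\circ\Ph=\CDO f_n$, we conclude that $\CDO$ is compact.

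For the reverse direction I would begin with $(g_n)$ bounded in $A^2$ and tending to $0$ uniformly on compact subsets, and pass to the antiderivatives $G_n(z)=\int_0^z g_n(\zeta)\,d\zeta$. The bookkeeping here is that $G_n(0)=0$ forces $\|G_n\|_{\DI}=\|g_n\|_{A^2}$, so $(G_n)$ is bounded in $\DI$, while integrating $g_n$ along short paths from the origin (all lying inside a fixed compact set) shows $G_n\to 0$ uniformly on compact subsets. Compactness of $\CDO$ then yields $\|\CDO G_n\|_{\DI}\to 0$, and the identity $\CDO G_n=G_n'\circ\Ph=g_n\circ\Ph=\CO g_n$ completes the argument.

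The computations are routine; the only point requiring genuine care is the transfer of local uniform convergence under differentiation and antidifferentiation, together with the correct normalization $G(0)=0$ of the antiderivative so that the $\DI$-norm matches the $A^2$-norm. I would also record the cleaner coordinate-free restatement, which bypasses Proposition \ref{prop:compactreformulation} entirely: writing $D\colon\DI\to A^2$ for $f\mapsto f'$ (bounded, since $\|Df\|_{A^2}\le\|f\|_{\DI}$) and $J\colon A^2\to\DI$ for the isometric antiderivative embedding $g\mapsto G$, one has $\CDO=\CO D$ and $\CO=\CDO J$, so each operator is obtained from the other by composing with a bounded map, and compactness of either forces compactness of the other by the ideal property of the compact operators.
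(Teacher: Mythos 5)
Your main argument is correct and is essentially identical to the paper's proof: both directions transfer test sequences between $\DI$ and $A^2$ via differentiation and the normalized antiderivative, verify that norm-boundedness and local uniform convergence to $0$ survive the transfer, and invoke Proposition \ref{prop:compactreformulation} together with the identities $\CDO f=\CO(f')$ and $\CO g=\CDO G$. The one point worth highlighting is your closing observation, which the paper does not make: writing $\CDO=\CO D$ and $\CO=\CDO J$ with $D\colon\DI\to A^2$ bounded and $J\colon A^2\to\DI$ the isometric antiderivative embedding, the equivalence follows at once from the ideal property of the compact operators, with no sequential characterization and no convergence bookkeeping needed. That version is shorter, avoids the (true but slightly delicate) facts about local uniform convergence passing to derivatives and antiderivatives, and generalizes verbatim to any other operator ideal; it would be a legitimate replacement for the whole proof.
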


\begin{proof}
Suppose $D_\Ph:\DI\to \DI$ is compact. To show $\CO:A^2\to\DI$ is compact, we will appeal to Proposition \ref{prop:compactreformulation}. To that end, let $(f_n)$ be a bounded sequence in $A^2$ such that $(f_n)\to 0$ uniformly on compact subsets of $\D$.  For $n\in \N$, let $g_n\in \DI$ with $g_n'=f_n$, i.e. take \[g_n(z)=\int_0^z f_n(w)\,dw.\]  Then $(g_n)$ is a bounded sequence in $\DI$ since $\|g_n\|_{\DI}^2=|g_n(0)|^2+\|g_n'\|_{A^2}^2=\|f_n\|_{A^2}^2$. The hypothesis on $(f_n)$ guarantees that $(g_n)\to 0$ uniformly on compact subsets of $\D$.  Thus $(\|\CDO g_n\|_{\DI})\to 0$ by Proposition \ref{prop:compactreformulation}.  Also, we have \[\|\CO f_n\|_{\DI}=\|\CO g_n'\|_{\DI}=\|g_n'\circ\Ph\|_{\DI}=\|\CDO g_n\|_{\DI}\] and thus $(\|\CO f_n\|_{\DI})\to 0$. Thus $\CO:A^2\to \DI$ is compact by Proposition \ref{prop:compactreformulation}.

Conversely, suppose $\CO:A^2\to \DI$ is compact.  To show $\CDO:\DI\to \DI$ is compact, let $(g_n)$ be a bounded sequence in $\DI$ such that $(g_n)\to 0$ uniformly on compact subsets of $\D$. Then $(g_n')$ is a bounded sequence in $A^2$ since $\|g_n'\|_{A^2}=\|g_n\|_{\DI_0}\leq \|g_n\|_{\DI}$. Moreover $(g_n')\to 0$ uniformly on compact subsets of $\D$; see \cite[Theorem V.1.6]{Lang:1993}.  By our hypothesis and Proposition \ref{prop:compactreformulation} we have $(\|\CO g_n'\|_{\DI})\to 0$.  But \[\|\CDO g_n\|_{\DI}=\|g_n'\circ \Ph\|_{\DI}=\|\CO g_n'\|_{\DI}\] and hence $(\|\CDO g_n\|_{\DI})\to 0$.  Thus $\CDO:\DI\to \DI$ is compact.
\end{proof}

The lemma leads to the following characterization of compactness for $\CDO:\DI\to\DI$.

\begin{theorem}\label{thm:compactness}
Let $\Ph$ be a self-map of $\D$ so that $\CDO:\DI\to \DI$ is bounded and set $d\mu=n_{\Ph}dA$. The following are equivalent.

\begin{enumerate}
\item $D_\Ph:\DI\to \DI$ is compact.
\item $\CO:A^2\to\DI$ is compact.
\item $\displaystyle \lim_{h\to 0} \sup_{\theta\in[0,2\pi)}\frac{\mu(S(\theta,h))}{h^4}=0$ .
\item $\displaystyle \lim_{|a|\to 1}\int_{\D}|\Ph_a'(w)|^4\,d\mu(w)=0$.
\end{enumerate}
\end{theorem}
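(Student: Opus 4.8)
The plan is to follow the architecture of the boundedness theorem, replacing each ``big-oh'' statement by its vanishing (``little-oh'') counterpart. The equivalence $\text{(i)}\Longleftrightarrow\text{(ii)}$ is already in hand: it is exactly the content of the preceding lemma, which identifies compactness of $\CDO:\DI\to\DI$ with compactness of $\CO:A^2\to\DI$. It remains to prove $\text{(ii)}\Longleftrightarrow\text{(iii)}$ and $\text{(iii)}\Longleftrightarrow\text{(iv)}$, for which I would establish the vanishing analogues of the Luecking and Arazy--Fisher--Peetre results invoked earlier (either by citing the compactness versions of those theorems or by the self-contained arguments sketched below). Throughout I use the identity $\|\CO f\|_{\DI}^2=|f(\Ph(0))|^2+\int_{\D}|f'(w)|^2\,d\mu(w)$ derived in the boundedness theorem, together with the compactness reformulation in Proposition~\ref{prop:compactreformulation}; note also that boundedness of $\CDO$ already forces $\mu$ to be a Carleson measure of order $h^4$, and in particular $\mu(\D)<\infty$.

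For $\text{(ii)}\Longrightarrow\text{(iii)}$ I would test with the normalized Bergman kernels. Given $\theta$ and $0<h<1$, set $a=(1-h)e^{i\theta}$ and $k_a(z)=\frac{1-|a|^2}{(1-\overline{a}z)^2}$. These satisfy $\|k_a\|_{A^2}=1$ and $k_a\to0$ uniformly on compact subsets of $\D$ as $|a|\to1$, while a direct computation gives $|k_a'(w)|^2\cong h^{-4}$ on $S(\theta,h)$. Hence $\|\CO k_a\|_{\DI}^2\geq\int_{S(\theta,h)}|k_a'|^2\,d\mu\cong \mu(S(\theta,h))/h^4$, so compactness of $\CO:A^2\to\DI$, via Proposition~\ref{prop:compactreformulation}, forces this quotient to vanish as $h\to0$, uniformly in $\theta$. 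For the converse $\text{(iii)}\Longrightarrow\text{(ii)}$, let $(f_n)$ be bounded in $A^2$ with $f_n\to0$ uniformly on compacta. I would split $\int_{\D}|f_n'|^2\,d\mu$ at radius $r$: on $\{|w|\leq r\}$ the Cauchy estimates give $f_n'\to0$ uniformly and $\mu(\{|w|\le r\})<\infty$, so this piece tends to $0$ for fixed $r$; on $\{|w|>r\}$ the tail measure $\mu_r=\mu|_{\{|w|>r\}}$ has Carleson constant tending to $0$ as $r\to1$ (small boxes handled by (iii), large boxes by $\mu(\D)<\infty$), so by the bounded Luecking estimate $\int_{|w|>r}|f_n'|^2\,d\mu\leq C(\mu_r)\|f_n\|_{A^2}^2$ is small uniformly in $n$. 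Since $f_n(\Ph(0))\to0$ as well, $\|\CO f_n\|_{\DI}\to0$.

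For $\text{(iii)}\Longleftrightarrow\text{(iv)}$ the easy direction is $\text{(iv)}\Longrightarrow\text{(iii)}$: taking again $a=(1-h)e^{i\theta}$, one checks $|\Ph_a'(w)|^4\cong h^{-4}$ on $S(\theta,h)$, so $\mu(S(\theta,h))/h^4\leq C\int_{\D}|\Ph_a'|^4\,d\mu$ and (iv) gives (iii). The reverse implication is the technical core. I would fix $\varepsilon>0$, choose $\delta$ from (iii) with $\mu(S(\theta,h))\leq\varepsilon h^4$ for $h<\delta$, and decompose $\int_{\D}|\Ph_a'|^4\,d\mu$ into a far region $\{|1-\overline{a}w|\geq t\}$, where $|\Ph_a'|^4\leq (1-|a|^2)^4 t^{-8}$ and the total mass $\mu(\D)$ is finite, plus dyadic rings about the boundary point $a/|a|$ at scales $2^{k}(1-|a|)$. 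On each ring $|\Ph_a'|^4\cong 2^{-8k}(1-|a|)^{-4}$ and the $\mu$-mass of the ring is controlled by the corresponding Carleson box; the rings of scale below $\delta$ contribute a convergent geometric series of size $O(\varepsilon)$, while the finitely many larger rings are handled by the ambient $h^4$-Carleson bound and contribute $O((1-|a|)^4)$. Letting $|a|\to1$ and then $\varepsilon\to0$ yields (iv).

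The hard part will be the bookkeeping in $\text{(iii)}\Longrightarrow\text{(iv)}$: one must split the dyadic sum at exactly the scale $\delta$ supplied by the vanishing hypothesis, apply (iii) below $\delta$ and mere boundedness (the ambient Carleson bound) above it, and verify that both the geometric tail and the finite head are controlled uniformly as $|a|\to1$. The analogous tail estimate for $\mu_r$ in $\text{(iii)}\Longrightarrow\text{(ii)}$ has the same flavor. If one prefers to avoid these estimates, both equivalences follow at once from the compactness/vanishing versions of \cite[Theorem 2.2]{Luecking:1985} and \cite[Theorem 13]{ArazyFisherPeetre:1985}, exactly paralleling the citations used in the boundedness theorem.
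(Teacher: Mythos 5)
Your proposal is correct in outline, and every estimate you sketch checks out, but you close the cycle of implications differently from the paper. The paper proves $\text{(ii)}\Rightarrow\text{(iv)}\Rightarrow\text{(iii)}\Rightarrow\text{(ii)}$: for $\text{(ii)}\Rightarrow\text{(iv)}$ it tests the same normalized Bergman kernels $k_a$ but pushes the computation of $\|(k_a\circ\Ph)'\|_{A^2}^2$ all the way down to the integral $\int_\D|\Ph_a'|^4\,d\mu$ via the change-of-variables identity \eqref{eqn:involutionintegral}, so it lands on (iv) directly rather than on the Carleson quotient; it then cites Tjani for $\text{(iv)}\Rightarrow\text{(iii)}$ (your one-line estimate for that step is the content of that citation) and never needs $\text{(iii)}\Rightarrow\text{(iv)}$ at all, so the dyadic-ring argument you identify as the technical core is work your cycle structure creates for itself --- it is a standard and correct vanishing-Carleson argument, but you could avoid it entirely by arranging the implications as the paper does. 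For $\text{(iii)}\Rightarrow\text{(ii)}$ the two arguments differ more substantively: the paper reproves the Carleson embedding from scratch (subharmonicity of $|f_n'|^2$ on disks $D(w,(1-|w|)/2)$, Fubini, then a split of the $z$-integral at $|z|=1-\delta/2$), whereas you split the measure $\mu$ at radius $r$ and invoke the \emph{quantitative} form of Luecking's theorem --- that the embedding constant is comparable to the Carleson constant --- applied to the tail measure $\mu_r$, whose Carleson constant you correctly show tends to $0$ using (iii) for small boxes and $\mu(\D)=\|\Ph\|_{\DI_0}^2<\infty$ for large ones. Your route is more modular and shorter if one is willing to quote that quantitative dependence; the paper's is self-contained. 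Both hinge on the same two observations you flag up front: the identity $\|\CO f\|_{\DI}^2=|f(\Ph(0))|^2+\int_\D|f'|^2\,d\mu$ and the weak-null family $k_a$.
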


\begin{proof}
By the lemma we see that (i) and (ii) are equivalent and we will show $\text{(ii)}\Longrightarrow\text{(iv)}\Longrightarrow\text{(iii)}\Longrightarrow\text{(ii)}$.  Suppose $\CO:A^2\to \DI$ is compact. For $a\in\D$ let $k_a(z)=(1-|a|^2)/(1-\overline{a}z)^2$.  Then $k_a\in A^2$ with $\|k_a\|_{A^2}=1$ and $k_a$ converges weakly to 0 as $|a|\to 1$ \cite[Theorem 2.17]{CowenMacCluer:1995}. Hence $(\|\CO k_a\|_{\DI})\to 0$ as $|a|\to 1$ since $\CO:A^2\to \DI$ is compact. Then we have \[\begin{aligned}\|\CO k_a\|_{\DI}^2\geq \|(k_a\circ \Ph)'\|_{A^2}^2&=\int_{\D}\frac{4|a|^2(1-|a|^2)^2|\Ph'(z)|^2}{|1-\overline{a}\Ph(z)|^6}\,dA(z)\\
&=4|a|^2\int_{\D}\frac{|\Ph'(z)|^2(1-|a|^2)^4(1-|\Ph(z)|^2)^4}{(1-|\Ph(z)|^2)^4|1-\overline{a}\Ph(z)|^8}\frac{|1-\overline{a}\Ph(z)|^2}{(1-|a|^2)^2}\,dA(z)\\
&\geq4|a|^2\int_{\D}\frac{|\Ph'(z)|^2(1-|a|^2)^4(1-|\Ph(z)|^2)^4}{(1-|\Ph(z)|^2)^4|1-\overline{a}\Ph(z)|^8}\frac{(1-|a|)^2}{(1-|a|^2)^2}\,dA(z)\\
&\geq |a|^2\int_{\D}\frac{|\Ph'(z)|^2(1-|a|^2)^4(1-|\Ph(z)|^2)^4}{(1-|\Ph(z)|^2)^4|1-\overline{a}\Ph(z)|^8}\,dA(z)\\
&=|a|^2\int_{\D}\frac{|\Ph'(z)|^2}{(1-|\Ph(z)|^2)^4}\left(\frac{(1-|a|^2)(1-|\Ph(z)|^2)}{|1-\overline{a}\Ph(z)|^2}\right)^4\,dA(z)\\
&=|a|^2\int_{\D}|\Ph_a'(w)|^4\,d\mu(w),
\end{aligned}\] where the last equality follows from Eqn.(\ref{eqn:involutionintegral}).  It follows that (iv) holds as $|a|\to 1$.

That (iv) implies (iii) is shown in \cite[Proposition 3.4]{Tjani:2003}. Lastly, we will show that (iii) implies (ii). To show $\CO:A^2\to \DI$ is compact, we will again appeal to Proposition \ref{prop:compactreformulation}.  Let $(f_n)$ be a bounded sequence in $A^2$ with $(f_n)\to 0$ uniformly on compact subsets of $\D$. Then we are left to show that $(\|\CO f_n\|_{\DI})\to 0$.  By our assumptions on $(f_n)$ we know $(|f_n(\Ph(0))|^2)\to 0$ and thus it is sufficient to show that $(\|\CO f_n\|_{\DI_0})\to 0$. Let $w\in\D$, $r=(1-|w|)/2$, and $D(w,r)=\{z\in\D: |z-w|<r\}$.  As $|f_n'|^2$ is subharmonic, for $w\in\D$, we have \[|f_n'(w)|^2\leq \frac{1}{r^2}\int_{D(w,r)}|f_n'(z)|^2\,dA(z)=\frac{4}{(1-|w|)^2}\int_{D(w,r)}|f_n'(z)|^2\,dA(z).\]  Then, from our previous estimate and Fubini's theorem, \[\begin{aligned}\|\CO f_n\|^2_{\DI_0}&=\int_\D|f_n'(\Ph(z))|^2|\Ph'(z)|^2\,dA(z)=\int_\D|f_n'(w)|^2\,d\mu(w)\\
&\leq\int_\D\frac{4}{(1-|w|)^2}\left(\int_{D(w,r)}|f_n'(z)|^2\,dA(z)\right)\,d\mu(w)\\
&=4\int_\D\frac{1}{(1-|w|)^2}\left(\int_\D\bigchi_{D(w,r)}(z)|f_n'(z)|^2\,dA(z)\right)\,d\mu(w)\\
&=4\int_\D|f_n'(z)|^2\left(\int_\D\frac{\bigchi_{D(w,r)}(z)}{(1-|w|)^2}\,d\mu(w)\right)\,dA(z).\end{aligned}\] If $z,w\in \D$ with $|w-z|<(1-|w|)/2$, then $(1-|w|)/2<1-|z|$ and, for $z=|z|e^{i\theta}$, we have \[|w-e^{i\theta}|\leq |w-z|+|z-e^{i\theta}|\leq \frac{1-|w|}{2}+(1-|z|)\leq 2(1-|z|).\] Thus, for a fixed $z\in\D$, if $w$ satisfies $|w-z|<(1-|w|)/2$, we have $w\in S(\theta,2(1-|z|))$.  Now define functions $F$ and $G$ on $\D\times \D$ by $F(z,w)=\bigchi_{D(w,r)}(z)$, where $r=(1-|w|)/2$, and $G(z,w)=\bigchi_{S(\theta,2(1-|z|))}(w)$, where $z=|z|e^{i\theta}$.  If $z$ is fixed and $w$ satisfies $z\in D(w,r)$, then $F(z,w)=1=G(z,w)$.  Otherwise $F(z,w)=0\leq G(z,w)$.  Thus $F(z,w)\leq G(z,w)$ on $\D\times \D$.  Additionally, if $|w-z|<(1-|w|)/2$, then $1/(1-|w|)\leq (3/2)1/(1-|z|)$. With these two estimates, we have \[\begin{aligned}\|\CO f\|^2_{\DI_0}&\leq 9\int_\D\frac{|f_n'(z)|^2}{(1-|z|)^2}\left(\int_\D\bigchi_{S(\theta,2(1-|z|))}(w)\,d\mu(w)\right)\,dA(z)\\
&=9\int_\D\frac{|f_n'(z)|^2}{(1-|z|)^2}\left(\int_{S(\theta,2(1-|z|))}\,d\mu(w)\right)\,dA(z).\\
\end{aligned}\]

Now, with (iii) as our assumption, let $\varepsilon>0$ and choose $\delta>0$ such that $\mu(S(\theta,h))<\varepsilon h^4$ for $0<h<\delta$ and $\theta\in[0,2\pi)$. Then we can split the last integral and we have \[\|\CO f\|_{\DI_0}\leq I_1 + I_2,\] where \[I_1=9\int_{|z|>1-\delta/2}\frac{|f_n'(z)|^2}{(1-|z|)^2}\left(\int_{S(\theta,2(1-|z|))}\,d\mu(w)\right)\,dA(z)\]
and \[I_2=9\int_{|z|\leq1-\delta/2}\frac{|f_n'(z)|^2}{(1-|z|)^2}\left(\int_{S(\theta,2(1-|z|))}\,d\mu(w)\right)\,dA(z).\] Utilizing our choice of $\delta$, we see that there is  a positive constant $C$ such that \[\begin{aligned}I_1&\leq\varepsilon2^4 9\int_{|z|>1-\delta/2}\frac{|f_n'(z)|^2}{(1-|z|)^2}(1-|z|)^4\,dA(z)\\
&\leq \varepsilon 2^49\int_\D|f_n'(z)|^2(1-|z|^2)^2\,dA(z)\\
&\leq C\varepsilon,
\end{aligned}\] where the last integral represents an equivalent (semi-)norm on $A^2$ and is therefore bounded as $n$ varies by our conditions on $(f_n)$.

Also, \[\begin{aligned}I_2&\leq \frac{36}{\delta^2}\int_{|z|\leq1-\delta/2}|f_n'(z)|^2\left(\int_\D\,d\mu(w)\right)\,dA(z)\\
&=\frac{36}{\delta^2}\|\Ph\|_{\DI_0}^2\int_{|z|\leq1-\delta/2}|f_n'(z)|^2\,dA(z).\\
\end{aligned}\] By our assumptions on $(f_n)$, we know that $(f_n')\to 0$ uniformly on compact subsets of $\D$ (see \cite[Theorem V.1.6]{Lang:1993}) and hence we can choose $n\in\N$ large enough so that this last integral is bounded above by a constant multiple of $\varepsilon$.  Thus we conclude that $(\|\CO f_n\|_{\DI_0})\to 0$ and hence $\CO:A^2\to \DI$ is compact as desired.
\end{proof}

The following corollary follows from either Proposition \ref{prop:compactreformulation} or Theorem \ref{thm:compactness}

\begin{corollary}\label{cor:compact}
Let $\Ph$ be a self-map of $\D$ so that $\CDO:\DI\to \DI$ is bounded. If $\|\Ph\|_{\infty}<1$, then $\CDO:\DI\to \DI$ is compact.
\end{corollary}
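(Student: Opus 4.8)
The plan is to verify condition (iii) of Theorem~\ref{thm:compactness}, which gives the cleanest route. Write $r=\|\Ph\|_{\infty}<1$, so that $\Ph(\D)\subseteq \overline{D(0,r)}$, a compact subset of $\D$. Since $n_{\Ph}(w)=0$ for $w\not\in\Ph(\D)$, the measure $\mu=n_{\Ph}\,dA$ is supported inside $\overline{D(0,r)}$. The key observation is purely geometric: if $z\in S(\theta,h)$, then $1=|e^{i\theta}|\leq|e^{i\theta}-z|+|z|<h+|z|$, so $|z|>1-h$; hence $S(\theta,h)\subseteq\{z:|z|>1-h\}$ for every $\theta$.

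Choosing any $h$ with $0<h<1-r$ forces $1-h>r$, so $S(\theta,h)$ is disjoint from $\overline{D(0,r)}$ and therefore from the support of $\mu$. Consequently $\mu(S(\theta,h))=0$ for all $\theta\in[0,2\pi)$ whenever $0<h<1-r$, and the supremum appearing in (iii) is identically zero for such $h$. Thus $\lim_{h\to 0}\sup_{\theta}\mu(S(\theta,h))/h^4=0$, and compactness of $\CDO$ follows at once from Theorem~\ref{thm:compactness}. (The degenerate case $\Ph\equiv 0$, where $r=0$, is covered automatically since then the bound $h<1-r=1$ holds for every $h\in(0,1)$.)

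Alternatively, I would argue directly from Proposition~\ref{prop:compactreformulation}. Given a bounded sequence $(g_n)$ in $\DI$ with $g_n\to 0$ uniformly on compact subsets of $\D$, the derivatives $g_n'$ and $g_n''$ also converge to $0$ uniformly on compact subsets by \cite[Theorem V.1.6]{Lang:1993}. Using Eqn.~\eqref{eqn:countingfunctioninnorm},
\[\|\CDO g_n\|_{\DI}^2=|g_n'(\Ph(0))|^2+\int_{\D}|g_n''(w)|^2\,n_{\Ph}(w)\,dA(w),\]
and since $n_{\Ph}$ is supported on $\overline{D(0,r)}$ I would bound the integral by $\big(\sup_{|w|\leq r}|g_n''(w)|^2\big)\,\mu(\D)$, which tends to $0$; the first term tends to $0$ because $g_n'(\Ph(0))\to 0$ pointwise. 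The one step requiring care here is the finiteness of the total mass $\mu(\D)=\int_{\D}|\Ph'(z)|^2\,dA(z)=\|\Ph\|_{\DI_0}^2$: this is guaranteed because boundedness of $\CDO$ applied to $f(z)=z^2/2\in\DI$ yields $\CDO f=\Ph\in\DI$, so $\mu$ is a finite measure.

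There is essentially no substantive obstacle. In the Proposition-based argument the only point to remember is the justification that $\mu(\D)<\infty$, while the Carleson-measure argument is immediate once the containment $S(\theta,h)\subseteq\{|z|>1-h\}$ is noted and compared against the compact support of $\mu$.
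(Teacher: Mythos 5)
Your proposal is correct, and it follows exactly the two routes the paper itself indicates (the paper offers no details, stating only that the corollary ``follows from either Proposition~\ref{prop:compactreformulation} or Theorem~\ref{thm:compactness}''); both of your arguments --- the vanishing of $\mu(S(\theta,h))$ for $h<1-\|\Ph\|_\infty$, and the direct estimate using the compact support of $n_\Ph$ together with the observation that boundedness of $\CDO$ applied to $z^2/2$ gives $\mu(\D)=\|\Ph\|_{\DI_0}^2<\infty$ --- are complete and sound.
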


\begin{example}
To generate a compact $\CDO$ for a self-map with boundary contact, we can modify the last self-map from Example \ref{exam:bounded}.  We take $\Gamma_3$ to be similar to $\Gamma_2$ from the aforementioned example but we use the curve $x^{1/4}+y^{1/4}=1$ as the boundary curve in a neighborhood of 1.  We then let $\Ph$ be a univalent map of $\D$ onto $\Gamma_3$.  Again, since $\Gamma_3$ is contained in a non-tangential approach region at 1, we only need to verify condition (iii) from Theorem \ref{thm:compactness} at $\theta=0$, i.e we need $\lim_{h\rightarrow 0}\mu(S(0,h))/h^4=0$. The computation is similar to those in the previous example and we omit the details.
\end{example}

Characterizing which $\CDO$ induce Hilbert-Schmidt operators on $\DI$ is a consequence of \cite[Theorem 3.23]{CowenMacCluer:1995}. As is the case for composition operators acting on $\DI$, this result has connections to the hyperbolic derivative of the inducing map.

\begin{theorem}
Let $\Ph$ be a self-map of $\D$.  Then $\CDO:\DI\to \DI$ is Hilbert-Schmidt if and only if \[\int_{\D}\frac{|\Ph'(z)|^2}{(1-|\Ph(z)|^2)^4}\,dA(z)<\infty.\]
\end{theorem}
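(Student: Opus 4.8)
The plan is to compute the Hilbert--Schmidt norm of $\CDO$ directly against an orthonormal basis of $\DI$, since by \cite[Theorem 3.23]{CowenMacCluer:1995} the operator $\CDO$ is Hilbert--Schmidt precisely when $\sum_{n}\|\CDO e_n\|_{\DI}^2<\infty$ for some (equivalently, every) orthonormal basis $(e_n)$. The weight sequence identified in the Preliminaries shows that $e_0(z)=1$ together with $e_n(z)=z^n/\sqrt{n}$ for $n\geq 1$ form such a basis. First I would record that $\CDO e_0=0$ and that, for $n\geq 1$, $e_n'(z)=\sqrt{n}\,z^{n-1}$, so that $\CDO e_n=\sqrt{n}\,\Ph^{\,n-1}$. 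This gives
\[\|\CDO\|_{HS}^2=\sum_{n=1}^{\infty}n\,\|\Ph^{\,n-1}\|_{\DI}^2,\]
an identity valid in $[0,\infty]$ regardless of whether $\CDO$ is a priori bounded.

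Next I would expand each factor using the Dirichlet norm. Writing $m=n-1$ and using $(\Ph^{m})'=m\,\Ph^{m-1}\Ph'$, we have
\[\|\Ph^{m}\|_{\DI}^2=|\Ph(0)|^{2m}+m^2\int_{\D}|\Ph(z)|^{2(m-1)}|\Ph'(z)|^2\,dA(z),\]
so that
\[\|\CDO\|_{HS}^2=\sum_{m=0}^{\infty}(m+1)|\Ph(0)|^{2m}+\sum_{m=1}^{\infty}(m+1)m^2\int_{\D}|\Ph(z)|^{2(m-1)}|\Ph'(z)|^2\,dA(z).\]
The first sum equals $\left(1-|\Ph(0)|^2\right)^{-2}$, which is finite for every self-map $\Ph$ and hence plays no role in the characterization. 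For the second sum, every term is nonnegative, so Tonelli's theorem lets me interchange summation and integration, reducing matters to summing the power series $S(x)=\sum_{m=1}^{\infty}(m+1)m^2x^{m-1}$ at $x=|\Ph(z)|^2\in[0,1)$.

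Finally I would evaluate $S(x)$ in closed form. Splitting $(m+1)m^2=m^3+m^2$ and invoking the standard generating functions $\sum_{m\geq 1}m^2x^{m-1}=(1+x)/(1-x)^3$ and $\sum_{m\geq 1}m^3x^{m-1}=(1+4x+x^2)/(1-x)^4$ yields $S(x)=2(1+2x)/(1-x)^4$. Since $1+2x$ is bounded above and below by positive constants on $[0,1)$, this gives $S(|\Ph(z)|^2)\cong(1-|\Ph(z)|^2)^{-4}$ with constants independent of $z$, whence
\[\sum_{m=1}^{\infty}(m+1)m^2\int_{\D}|\Ph|^{2(m-1)}|\Ph'|^2\,dA\;\cong\;\int_{\D}\frac{|\Ph'(z)|^2}{(1-|\Ph(z)|^2)^4}\,dA(z).\]
Combining the two pieces, $\|\CDO\|_{HS}^2<\infty$ if and only if this last integral is finite, which is the claim. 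I expect the only genuine work to be the closed-form evaluation of $S(x)$ and the verification that the resulting comparison constants are uniform in $z$; the interchange of sum and integral via nonnegativity and the harmless finiteness of the $\Ph(0)$-term are routine.
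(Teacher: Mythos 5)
Your proof is correct and follows essentially the same route as the paper: both test the Hilbert--Schmidt condition against the orthonormal basis $e_n(z)=z^n/\sqrt{n}$, reduce to a power series in $|\Ph(z)|^2$, and identify that series (up to uniform constants) with $(1-|\Ph(z)|^2)^{-4}$. The only cosmetic difference is that you evaluate $\sum_{m\geq 1}(m+1)m^2x^{m-1}=2(1+2x)/(1-x)^4$ in closed form, whereas the paper compares the coefficients $(n+2)(n+1)^2$ two-sidedly with $(n+3)(n+2)(n+1)$ and sums the latter to $3!/(1-x)^4$; you are also slightly more careful in explicitly disposing of the $|\Ph(0)|$-terms.
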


\begin{proof}
We know that the sequence of functions $(e_n)$ defined by $e_0(z)=1$ and $e_n(z)=z^n/\sqrt{n}$, for $n\geq 1$, is an orthonormal basis for $\DI$. It follows that $\CDO$ is Hilbert-Schmidt on $\DI$ if and only if \[\sum_{n=0}^{\infty}\|\CDO e_n\|_{\DI}^2<\infty.\] Considering \myeqref{eqn:countingfunctioninnorm}, we see further that $\CDO$ is Hilbert-Schmidt if and only if  \[\sum_{n=2}^{\infty}\|\CDO e_n\|_{\DI_0}^2=\sum_{n=0}^{\infty}\int_\D(n+2)(n+1)^2|\Ph(z)|^{2n}|\Ph'(z)|^2\,dA(z)<\infty.\] Differentiating the geometric series three times,  we have \[\sum_{n=0}^{\infty}(n+3)(n+2)(n+1)z^n=\frac{3!}{(1-z)^4},\] for $z\in\D$. Also, for $n\geq 0$, we have $1/3\leq (n+1)/(n+3)\leq 1$ and it follows that $\sum_{n=2}^{\infty}\|\CDO e_n\|_{\DI_0}^2$ is bounded above and below by multiples of \[\sum_{n=0}^{\infty}\int_\D(n+3)(n+2)(n+1)|\Ph(z)|^{2n}|\Ph'(z)|^2\,dA(z)=3!\int_\D\frac{|\Ph'(z)|^2}{(1-|\Ph(z)|^2)^4}\,dA(z).\] The conclusion now follows.
\end{proof}

\section{Adjoints}

In this section we will consider the adjoint of a composition-differentiation operator induced by a linear fractional map $\Ph$ with $\|\Ph\|_{\infty}<1$; we assume $\Ph$ is non-constant and ask the reader to recall Eqs. (\ref{eqn:lfphi}) and (\ref{eqn:lfsigma}).  Note any such map will induce a bounded (and, in fact, compact)  composition-differentiation operator on $\DI$. Moreover, the conditions on $\Ph$, and hence on $\sigma$, guarantee that both $|cz+d|$ and $|-\overline{b}z+\overline{d}|$ are bounded away from 0 in the disk.

For $\psi$ analytic in $\D$, we define the multiplication operator $T_\psi$ by \[T_\psi(f)=\psi\cdot f.\] The following lemma gives a sufficient condition for $T_\psi$ to be bounded on the Dirichlet space.

\begin{lemma}\label{lemma:boundedmultiplicationoperator}
Let $\psi$ be analytic in $\D$ with both $\psi$ and $\psi'$ in $H^{\infty}(\D)$. Then $T_{\psi}$ is a bounded multiplication operator on $\DI$.
\end{lemma}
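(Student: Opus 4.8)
The plan is to estimate $\|T_\psi f\|_{\DI}$ directly against $\|f\|_{\DI}$ by means of the product rule, using the two hypotheses $\psi\in H^\infty(\D)$ and $\psi'\in H^\infty(\D)$ on separate pieces. Writing $(\psi f)'=\psi' f+\psi f'$ and recalling the definition of the Dirichlet norm, I would begin from
\[\|\psi f\|_{\DI}^2=|\psi(0)|^2|f(0)|^2+\int_\D|\psi'(z)f(z)+\psi(z)f'(z)|^2\,dA(z),\]
and apply the elementary inequality $|a+b|^2\le 2|a|^2+2|b|^2$ to the integrand so as to split it into a ``multiplier times $f'$'' term and a ``multiplier-derivative times $f$'' term.

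The term $\int_\D|\psi(z)|^2|f'(z)|^2\,dA(z)$ is immediate: since $\psi\in H^\infty(\D)$ we bound $|\psi(z)|\le\|\psi\|_\infty$ and recognize the remaining integral as $\|f\|_{\DI_0}^2\le\|f\|_{\DI}^2$. The boundary term is likewise controlled, by $\|\psi\|_\infty^2|f(0)|^2\le\|\psi\|_\infty^2\|f\|_{\DI}^2$. This leaves the single genuinely new contribution $\int_\D|\psi'(z)|^2|f(z)|^2\,dA(z)$, which after extracting $\|\psi'\|_\infty^2$ reduces to controlling $\int_\D|f(z)|^2\,dA(z)=\|f\|_{A^2}^2$ by $\|f\|_{\DI}^2$.

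Thus the crux of the argument is the continuous embedding $\DI\hookrightarrow A^2$, and this is the one step where I expect to do the (modest) real work. I would establish it at the level of Taylor coefficients: for $f=\sum_{n\ge 0}a_nz^n$ one has $\|f\|_{A^2}^2=\sum_{n\ge 0}|a_n|^2/(n+1)$, while $\|f\|_{\DI}^2=|a_0|^2+\sum_{n\ge 1}n|a_n|^2$. Since $1/(n+1)\le 1$ when $n=0$ and $1/(n+1)\le 1\le n$ when $n\ge 1$, a termwise comparison yields $\|f\|_{A^2}\le\|f\|_{\DI}$. Combining the three estimates then gives
\[\|\psi f\|_{\DI}^2\le\big(3\|\psi\|_\infty^2+2\|\psi'\|_\infty^2\big)\|f\|_{\DI}^2,\]
so that $T_\psi$ maps $\DI$ into itself and is bounded, with this explicit norm bound as a byproduct. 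The only subtlety worth flagging is that both hypotheses are genuinely needed and work in tandem: $\psi\in H^\infty$ tames the $\psi f'$ contribution and $\psi'\in H^\infty$ tames the $\psi' f$ contribution, and neither assumption alone suffices to bound $(\psi f)'$ in $A^2$.
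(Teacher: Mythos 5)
Your proof is correct and complete; the paper actually states this lemma without any proof, so there is nothing to compare against, and the product-rule decomposition $(\psi f)'=\psi'f+\psi f'$ combined with the coefficientwise embedding $\|f\|_{A^2}\le\|f\|_{\DI}$ is exactly the standard argument one would supply here. One small quibble with your closing remark: the hypothesis $\psi\in H^\infty(\D)$ is in fact redundant, since $\psi(z)=\psi(0)+\int_0^z\psi'(w)\,dw$ gives $|\psi(z)|\le|\psi(0)|+\|\psi'\|_\infty$, so the assumption $\psi'\in H^\infty(\D)$ alone already suffices; this does not affect the validity of your estimate.
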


For a bounded multiplication operator acting on a reproducing kernel Hilbert space, the adjoint of the multiplication operator has predictable behavior when acting on a kernel function; in particular, $T_\psi^*(K_w)=\overline{\psi(w)}K_w$. There are similar results for the adjoint of a composition or a composition-differentiation operator: $\CO^*(K_w)=K_{\Ph(w)}$ and $\CDO^*(K_w)=K_{\Ph(w)}^{(1)}$. We will utilize these facts to prove the following theorem, which is an analogue of \cite[Theorem 1]{FatehiHammond:2020}

\begin{theorem}
Let $\Ph$ be a linear fractional self-map of $\D$ with $\|\Ph\|_{\infty}<1$. Then $D_{\Ph}^*T_{K_{\SI(0)}^{(1)}}^*=T_{K_{\Ph(0)}^{(1)}}D_{\SI}$.
\end{theorem}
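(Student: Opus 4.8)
The plan is to verify this operator identity by testing both sides against the reproducing kernels $K_w$, $w\in\D$, whose linear span is dense in $\DI$. Before doing so I must check that both sides are genuinely bounded operators, so that agreement on a dense set forces equality. Since $\|\Ph\|_{\infty}<1$ forces $|\Ph(0)|<1$, and since the denominators of $\Ph$ and $\SI$ are bounded away from $0$ on $\D$ (whence $|d|>|c|$ and $|\SI(0)|=|c|/|d|<1$), the symbols $K_{\Ph(0)}^{(1)}(z)=z/(1-\overline{\Ph(0)}z)$ and $K_{\SI(0)}^{(1)}(z)=z/(1-\overline{\SI(0)}z)$, together with their derivatives, lie in $H^{\infty}(\D)$; hence $T_{K_{\Ph(0)}^{(1)}}$ and $T_{K_{\SI(0)}^{(1)}}$ are bounded by Lemma \ref{lemma:boundedmultiplicationoperator}. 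As $\|\Ph\|_{\infty}<1$ (and likewise $\|\SI\|_{\infty}<1$ for the companion map), both $D_{\Ph}$ and $D_{\SI}$ are bounded by Example \ref{exam:bounded}. Thus $D_{\Ph}^{*}T_{K_{\SI(0)}^{(1)}}^{*}$ and $T_{K_{\Ph(0)}^{(1)}}D_{\SI}$ are bounded, and it suffices to show they agree on each $K_w$.

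For the left-hand side I would invoke the two adjoint formulas recalled just before the theorem. Applying the multiplier adjoint first gives $T_{K_{\SI(0)}^{(1)}}^{*}K_w=\overline{K_{\SI(0)}^{(1)}(w)}\,K_w$, and then $D_{\Ph}^{*}K_w=K_{\Ph(w)}^{(1)}$ yields $D_{\Ph}^{*}T_{K_{\SI(0)}^{(1)}}^{*}K_w=\overline{K_{\SI(0)}^{(1)}(w)}\,K_{\Ph(w)}^{(1)}$. For the right-hand side I would compute directly: since $K_w'(\zeta)=\overline{w}/(1-\overline{w}\zeta)$, we have $D_{\SI}K_w=K_w'\circ\SI$, and multiplying by $K_{\Ph(0)}^{(1)}$ produces an explicit analytic function of $z$. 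Writing each kernel via $K_u^{(1)}(z)=z/(1-\overline{u}z)$ and cancelling the common factor $z\overline{w}$ appearing on both sides, the required equality of the two functions of $z$ reduces to the single scalar identity
\[(1-\SI(0)\overline{w})\bigl(1-\overline{\Ph(w)}z\bigr)=\bigl(1-\overline{\Ph(0)}z\bigr)\bigl(1-\overline{w}\,\SI(z)\bigr).\]

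The crux of the argument is this last identity, which I expect to be the main obstacle and which is precisely where the linear-fractional structure of $\Ph$ and its companion $\SI$ enters. Writing $\Ph(z)=(az+b)/(cz+d)$ and $\SI(z)=(\overline{a}z-\overline{c})/(-\overline{b}z+\overline{d})$ as in \eqref{eqn:lfphi} and \eqref{eqn:lfsigma}, I would clear denominators and observe the key cancellation: $1-\overline{\Ph(w)}z$ and $1-\overline{w}\,\SI(z)$ share the common numerator $(\overline{d}+\overline{c}\,\overline{w})-(\overline{b}+\overline{a}\,\overline{w})z$, differing only through the respective denominators $\overline{c}\,\overline{w}+\overline{d}$ and $-\overline{b}z+\overline{d}$. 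Their ratio is therefore the denominator quotient $(-\overline{b}z+\overline{d})/(\overline{c}\,\overline{w}+\overline{d})$. Since $\Ph(0)=b/d$ and $\SI(0)=-\overline{c}/\overline{d}$, a direct check shows $(1-\overline{\Ph(0)}z)/(1-\SI(0)\overline{w})$ equals this same quotient, which is exactly the rearranged displayed identity. Matching these expressions completes the verification on kernels, and the density of $\operatorname{span}\{K_w:w\in\D\}$ together with the boundedness established above upgrades the kernel-wise equality to the claimed operator identity.
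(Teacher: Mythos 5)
Your proposal is correct and follows essentially the same route as the paper: apply both sides to the reproducing kernels $K_w$, use $T_\psi^*K_w=\overline{\psi(w)}K_w$ and $D_\Ph^*K_w=K_{\Ph(w)}^{(1)}$ on the left, compute the right side directly, and match the two via the linear-fractional algebra (your "common numerator" cancellation is exactly the simplification the paper performs), then conclude by density of the kernels. The only addition is your explicit boundedness check of both composite operators (including the observation that $\|\SI\|_\infty<1$), which the paper leaves implicit but which is a harmless and correct refinement.
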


\begin{proof}
For $\Ph$ and $\SI$ as given above, $\Ph(0)=b/d$ and $\SI(0)=-\overline{c}/\overline{d}$, from which we have \[ K_{\Ph(0)}^{(1)}(z)=\frac{\overline{d}z}{-\overline{b}z+\overline{d}}\] and \[K_{\SI(0)}^{(1)}(z)=\frac{dz}{cz+d}.\]  Note that both of these functions are bounded in $\D$, as are their respective derivatives, by the remarks preceding Lemma \ref{lemma:boundedmultiplicationoperator} and hence the induced multiplication operators are bounded on $\DI$ by the aforementioned lemma. It follows immediately that \[D_{\Ph}^*T_{K_{\SI(0)}^{(1)}}^*(K_w)=\overline{K_{\SI(0)}^{(1)}(w)}D_{\Ph}^*(K_w)=\frac{\overline{dw}}{\overline{cw}+\overline{d}}K_{\Ph(w)}^{(1)}.\]  Next, observe that \[D_{\SI}(K_w)(z)=C_{\SI}\left(\frac{\overline{w}}{1-\overline{w}z}\right)=\frac{\overline{w}}{1-\overline{w}\SI(z)}\] and so \[\begin{aligned}T_{K_{\Ph(0)}^{(1)}}D_{\SI}(K_w)(z)&=\left(\frac{\overline{d}z}{-\overline{b}z+\overline{d}}\right)\left(\frac{\overline{w}}{1-\overline{w}\SI(z)}\right)\\
&=\frac{\overline{dw}z}{\overline{cw}+\overline{d}-(\overline{aw}+\overline{b})z}\\
&=\left(\frac{\overline{dw}}{\overline{cw}+\overline{d}}\right)\left(\frac{z}{1-\overline{\Ph(w)}z}\right)\\
&=\frac{\overline{dw}}{\overline{cw}+\overline{d}}K_{\Ph(w)}^{(1)}(z).
\end{aligned}\] The kernel functions span a dense set in $\DI$ and hence we conclude that $D_{\Ph}^*T_{K_{\SI(0)}^{(1)}}^*=T_{K_{\Ph(0)}^{(1)}}D_{\SI}$ on $\DI$.
\end{proof}

For composition operators induced by linear fractional symbol acting on the Dirichlet space, the adjoint has the form $\CO^*=C_{\sigma}+K$, where $K$ is a specific rank 2 operator (see \cite[Theorem 3.3]{GallardoRodriguez:2003}), whereas on the Hardy space the adjoint for the composition and composition-differentiation operators seem to have more similarity.  Specifically, we have $\CO^*T_{K_\sigma(0)}^*=T_{K_\Ph(0)}C_{\sigma}$ and $D_{\Ph}^*T_{K_{\SI(0)}^{(1)}}^*=T_{K_{\Ph(0)}^{(1)}}D_{\SI}$, respectively, where the kernels appearing here are those for the Hardy space (see \cite[Theorem 2]{Cowen:1988} and \cite[Theorem 1]{FatehiHammond:2020}).

\section{Special Symbols}\label{Section:SpecialSymbols}

In this section we investigate composition-differentiation operators induced by a monomial symbol.  Specifically, we explore the norm and spectrum of such an operator.

\subsection{Norm}\label{Subsection:Norm}

We first consider the norm of the operator $D_\varphi$ acting on $\DI$ when $\varphi(z) = az^M$ for $0 < |a| < 1$ and $M\in\N$. Note that any such $\varphi$ will induce a compact operator on $\DI$ by Corollary \ref{cor:compact}. In the case of the Hardy space, the authors in \cite{FatehiHammond:2020} obtain a similar result for $\CDO$ acting on the Hardy space $H^2$. In that setting, they utilize certain facts about composition and multiplication operators on $H^2$ that do not hold on the Dirichlet space (or on the standard weighted Bergman or Dirichlet spaces). Specifically, the authors there rely on the fact that $T_z^*T_z=I$ for $T_z$ acting on the Hardy space. Instead, we will exploit the fact that the operator $\CDO$, for $\Ph$ as above, preserves the orthogonality of the monomials.

For $0<|a|<1$ and $M\in\N$, we define constants 
\[\nu = \left\lfloor\frac{2}{1-|a|^2}\right\rfloor\]
and 
\[\mathcal{N}_M = \max\left\{1, \sqrt{M}\sqrt{\nu(\nu-1)}|a|^{\nu-1}\right\}.\]

\begin{theorem} If $\varphi(z) = az^M$ for $0 < |a| < 1$ and $M$ in $\N$, then $\|D_\varphi\| = \mathcal{N}_M$.
\end{theorem}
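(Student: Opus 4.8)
The plan is to diagonalize $\CDO$ against the orthonormal basis $(e_n)$ with $e_0(z)=1$ and $e_n(z)=z^n/\sqrt{n}$ for $n\ge 1$, exploiting the observation flagged just before the theorem that this particular $\CDO$ carries monomials to monomials. First I would compute $\CDO e_n$: since $e_n'(z)=\sqrt{n}\,z^{n-1}$, we get $\CDO e_n(z)=e_n'(\Ph(z))=\sqrt{n}\,a^{n-1}z^{M(n-1)}$. In particular $\CDO e_0=0$, $\CDO e_1\equiv 1$, and for $n\ge 2$ the image is a scalar multiple of $z^{M(n-1)}$. Because the degrees $0,M,2M,\dots$ are pairwise distinct (as $M\ge 1$), the vectors $(\CDO e_n)_{n\ge 1}$ are pairwise orthogonal in $\DI$.

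Second, I would convert this orthogonality into a norm formula. For $f=\sum_n\alpha_n e_n$ one has $\CDO f=\sum_n\alpha_n\,\CDO e_n$ with orthogonal summands, so $\|\CDO f\|_{\DI}^2=\sum_n|\alpha_n|^2\|\CDO e_n\|_{\DI}^2\le\bigl(\sup_n\|\CDO e_n\|_{\DI}^2\bigr)\|f\|_{\DI}^2$, with equality approached on the basis vectors themselves. Hence $\|\CDO\|^2=\sup_n\|\CDO e_n\|_{\DI}^2$. Using $\|z^k\|_{\DI}^2=k$ for $k\ge 1$ and $\|1\|_{\DI}^2=1$, this gives $\|\CDO e_1\|_{\DI}^2=1$ and, for $n\ge 2$, $\|\CDO e_n\|_{\DI}^2=n|a|^{2(n-1)}\cdot M(n-1)=M\,n(n-1)|a|^{2(n-1)}$. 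Thus $\|\CDO\|^2=\max\{1,\ \sup_{n\ge 2}M\,n(n-1)|a|^{2(n-1)}\}$, and matching this against $\mathcal{N}_M^2=\max\{1,\ M\nu(\nu-1)|a|^{2(\nu-1)}\}$ reduces the theorem to showing the discrete supremum is attained at $n=\nu$.

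The main obstacle is this last discrete optimization, and the clean route is a ratio test rather than calculus. Writing $h(n)=n(n-1)|a|^{2(n-1)}$ and $r=|a|^2\in(0,1)$, I would compute $h(n+1)/h(n)=\frac{n+1}{n-1}\,r$ and verify that this is $\ge 1$ exactly when $n+1\le \frac{2}{1-r}$. Hence $h$ is non-decreasing for $n\le\nu$ and decreasing for $n>\nu$, where $\nu=\lfloor 2/(1-r)\rfloor$, so its maximum over integers $n\ge 2$ equals $h(\nu)$; one checks $2/(1-r)>2$ to confirm $\nu\ge 2$ lies in range, and the boundary case where $2/(1-r)$ is an integer only produces a harmless tie $h(\nu-1)=h(\nu)$. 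Substituting $n=\nu$ then yields $\sup_{n\ge 2}M\,h(n)=M\nu(\nu-1)|a|^{2(\nu-1)}$, so $\|\CDO\|^2=\mathcal{N}_M^2$ and $\|\CDO\|=\mathcal{N}_M$. The one point to keep honest is the exact translation between the increase/decrease threshold $\frac{1+r}{1-r}=\frac{2}{1-r}-1$ and the floor defining $\nu$; everything else is bookkeeping.
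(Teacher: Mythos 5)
Your proposal is correct and follows essentially the same route as the paper: both exploit that $D_\varphi$ sends the orthonormal basis $(e_n)$ to pairwise orthogonal monomials, reduce $\|D_\varphi\|$ to the discrete supremum of $\sqrt{M}\sqrt{n(n-1)}|a|^{n-1}$, and locate that supremum at $n=\nu$ via the same consecutive-term comparison (your ratio test is the paper's inequality $\sqrt{(n-1)(n-2)}|a|^{n-2}\leq\sqrt{n(n-1)}|a|^{n-1}$ in disguise). The only cosmetic difference is that you package the upper and lower bounds into the single identity $\|D_\varphi\|^2=\sup_n\|D_\varphi e_n\|_{\DI}^2$, whereas the paper proves the two inequalities separately.
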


\begin{proof}
For $\varphi$ as given, we will show $\|D_\varphi\| \geq \mathcal{N}_M$ and $\|D_\varphi\| \leq \mathcal{N}_M$. To obtain the lower bound we first consider the orthonormal basis functions $(e_n)_{n\geq 0}$ defined by $e_0(z) = 1$ and $e_n(z) = z^n/\sqrt{n}$, $n\geq 1$. For $z\in \D$, observe $e_0'(\varphi(z))=0$,
\[e_1'(\varphi(z)) = 1 = e_0(z),\]
and, for $n \geq 2$,
\[e_n'(\varphi(z)) = \frac{n\left(az^M\right)^{n-1}}{\sqrt{n}} = \sqrt{n}a^{n-1}z^{M(n-1)} =  \sqrt{M}\sqrt{n(n-1)}a^{n-1}e_{M(n-1)}(z).\] Thus \[\|D_\varphi\| \geq \max\left\{1,\sup_{n \geq 2}\sqrt{M}\sqrt{n(n-1)}|a|^{n-1}\right\}.\] The lower bound will follow if we can verify that \[\sup_{n \geq 2}\sqrt{M}\sqrt{n(n-1)}|a|^{n-1} = \sqrt{M}\sqrt{\nu(\nu-1)}|a|^{\nu-1}.\] Note that the function $g(x) = \sqrt{M}\sqrt{x(x-1)}|a|^{x-1}$ has exactly one critical point in $(1,\infty)$. This point is a local maximum and hence also the absolute maximum of $g$ on $(1,\infty)$.  Thus the supremum we wish to compute is a maximum and will occur at the greatest integer $n \geq 2$ such that 
\[\sqrt{(n-1)(n-2)}|a|^{n-2} \leq \sqrt{n(n-1)}|a|^{n-1};\] equivalently,
\[n \leq \frac{2}{1-|a|^2}.\]
Thus, \begin{equation}\label{eqn:supremum}\sup_{n \geq 2}\sqrt{M}\sqrt{n(n-1)}|a|^{n-1} = \sqrt{M}\sqrt{\nu(\nu-1)}|a|^{\nu-1},\end{equation} which implies $\|D_\varphi\| \geq \mathcal{N}_M$. 

For the upper estimate, let $f(z) = \sum_{n=0}^{\infty} b_nz^n= b_0e_0(z)+\sum_{n=1}^{\infty}b_n\sqrt{n}e_n(z)$ be in $\DI$. Then 
\[\begin{aligned}(D_\varphi f)(z)&= \sum_{n=1}^{\infty} b_n\sqrt{n}e_n'(\varphi(z))\\
&=b_1+\sum_{n=2}^{\infty}b_n\sqrt{n}\left(\sqrt{M}\sqrt{n(n-1)}a^{n-1}e_{M(n-1)}(z)\right).
\end{aligned}\] With this and \myeqref{eqn:supremum}, we have  
\[\begin{aligned}
\|D_\varphi f\|_{\DI}^2 &= |b_1|^2+\sum_{n=2}^{\infty}|b_n|^2n\left(\sqrt{M}\sqrt{n(n-1)}|a|^{n-1}\right)^2\\
&\leq |b_1|^2+\sum_{n=2}^{\infty}|b_n|^2n\left(\sqrt{M}\sqrt{\nu(\nu-1)}|a|^{\nu-1}\right)^2\\
&\leq \mathcal{N}_M^2\left(|b_1|^2+\sum_{n=2}^{\infty}|b_n|^2n\right) \\
&\leq \mathcal{N}_M^2 \|f\|_{\DI}^2\\
\end{aligned}\] and thus $\|D_\varphi\|\leq \mathcal{N}_M$ as desired.
\end{proof}

Further investigation of the quantities in the theorem reveal how the norm changes with respect to $|a|$.  When $M=1$, $\|D_\varphi\|=1$ when $0<|a|\leq6^{-1/4}$ and $\|D_\varphi\|>1$ for $6^{-1/4}<|a|<1$. For $M\geq 2$, $\|D_\varphi\|=1$ when $0<|a|\leq1/\sqrt{2M}$ and $\|D_\varphi\|>1$ for $1/\sqrt{2M}<|a|<1$. In both cases, the norm tends to infinity as $|a|\to1$; see Figure \ref{Figure:NormDphi} below.

As alluded to before the statement of the theorem, our proof here does not rely on operator-theoretic properties of multiplication operators that are specific to a certain space, but rather on the fact that, for these symbols, the composition-differentiation operator preserves the orthogonality of the monomials.  This method applies to the Hardy space as well as a variety of other spaces.

\begin{figure}
\begin{center}
\resizebox {.9\textwidth} {\height} {
\begin{tikzpicture}[baseline]
    \begin{axis}[
	    axis x line=middle,
		axis y line=middle,
		axis on top,
		width=.75\textwidth,
		ylabel={\footnotesize $\|D_\varphi\|$},
		xlabel={\footnotesize $|a|$},
		ytick={1,3},
		yticklabels={1,3},
		xtick={0,0.408248,.5,0.638943,1},
		xticklabels={0,$\frac{1}{\sqrt{6}}$,$\frac{1}{\sqrt{4}}$,$\frac{1}{\sqrt[4]{6}}$,$1$},
		xtick align=outside,
		xmin=0, xmax=1.05, ymin=0, ymax=4.25,
		]
		\addplot[black,ultra thick,domain=0:0.95] {max(1,sqrt(floor(2/(1-x^2))*(floor(2/(1-x^2))-1))*x^(floor(2/(1-x^2))-1))};
		\addplot[black,ultra thick,domain=0:0.9] {max(1,sqrt(2)*sqrt(floor(2/(1-x^2))*(floor(2/(1-x^2))-1))*x^(floor(2/(1-x^2))-1))};
		\addplot[black,ultra thick,domain=0:0.9] {max(1,sqrt(3)*sqrt(floor(2/(1-x^2))*(floor(2/(1-x^2))-1))*x^(floor(2/(1-x^2))-1))};
		\node at (axis cs:0.638943,1) [anchor=north] {\tiny $M$=$1$};
		\node at (axis cs:.5,1) [anchor=north] {\tiny $M$=$2$};
		\node at (axis cs:0.408248,1) [anchor=north] {\tiny $M$=$3$};
	\end{axis}
\end{tikzpicture}
}
\caption{Norm of $D_\varphi$ for $\varphi(z) = az^M$, as a function of $|a|$, with $M=1,2,3$.}\label{Figure:NormDphi}
\end{center}
\end{figure}
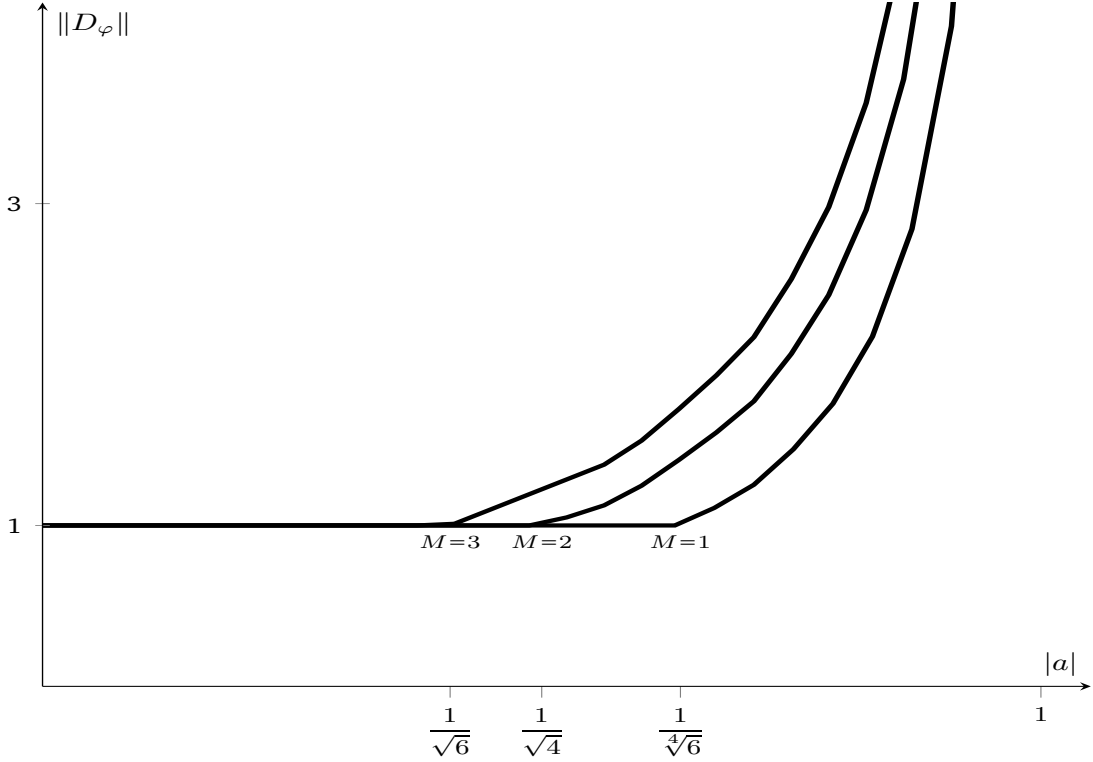

\subsection{Spectrum}\label{Subsection:Spectrum}

Since a self-map of the disk $\Ph$ with $\|\varphi\|_\infty < 1$ induces a compact operator $D_\varphi$ on $\DI$ by Corollary \ref{cor:compact}, as it does on the Hardy space $H^2$, the proof of the following theorem follows exactly as presented in \cite[Proposition 3]{FatehiHammond:2020}. 

\begin{theorem}\label{Theorem:SpectrumLinear}
If $\varphi(z) = az+b$, for $0 < |a| < 1-|b|$, then the spectrum of $D_\varphi$ is $\{0\}$.
\end{theorem}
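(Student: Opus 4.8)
The plan is to reproduce the Fatehi--Hammond argument in the Dirichlet setting, reducing the spectrum computation to the absence of nonzero eigenvalues. First I would record that the hypothesis $0<|a|<1-|b|$ gives $|a|+|b|<1$, so $\|\varphi\|_\infty<1$ and hence $\CDO$ is compact on $\DI$ by Corollary \ref{cor:compact}. For a compact operator on a complex Hilbert space the spectrum is nonempty and consists of $0$ together with the (at most countably many) nonzero eigenvalues, each of finite multiplicity. Since $0$ must lie in the spectrum of a compact operator on an infinite-dimensional space, it suffices to prove that $\CDO$ has no nonzero eigenvalue; the spectrum is then forced to be exactly $\{0\}$.

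The key structural observation is that $\varphi$ has its attracting fixed point $p=b/(1-a)$ inside $\D$: from $|1-a|\ge 1-|a|>|b|$ we get $|p|=|b|/|1-a|<1$, and moreover $\varphi(z)-p=a(z-p)$, so conjugating by the translation $z\mapsto z-p$ linearizes $\varphi$. Suppose, toward a contradiction, that $\CDO f=\lambda f$ for some $\lambda\neq 0$ and some nonzero $f\in\DI$, i.e.\ $f'(\varphi(z))=\lambda f(z)$. Setting $\tilde f(u)=f(p+u)$, which is analytic on a neighborhood of $u=0$ of radius at least $1-|p|>0$, the eigenvalue equation becomes $\tilde f'(au)=\lambda\tilde f(u)$. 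Writing $\tilde f(u)=\sum_{k\ge 0}d_k u^k$ and matching Taylor coefficients yields the recursion
\[
(k+1)a^k d_{k+1}=\lambda d_k,\qquad k\ge 0,
\]
which solves to
\[
d_k=d_0\,\frac{\lambda^k}{k!\,a^{k(k-1)/2}},\qquad k\ge 0.
\]

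The contradiction comes from the growth of these coefficients. If $d_0\neq 0$, then $|d_k|^{1/k}=|d_0|^{1/k}|\lambda|\,(k!)^{-1/k}|a|^{-(k-1)/2}$, and since $|a|<1$ the factor $|a|^{-(k-1)/2}$ grows geometrically while $(k!)^{-1/k}\to 0$ only like $e/k$; thus $|d_k|^{1/k}\to\infty$ and the radius of convergence of $\tilde f$ is $0$, contradicting analyticity at the interior point $p$. Hence $d_0=0$, and the recursion forces $d_k=0$ for all $k$, so $f\equiv 0$, contradicting $f\neq 0$. Therefore $\CDO$ has no nonzero eigenvalue, and combined with compactness the spectrum is $\{0\}$. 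I expect the only delicate point to be the growth analysis in this last step: one must see that the superexponential factor $|a|^{-k(k-1)/2}$ overwhelms $k!$ in the denominator to collapse the radius of convergence to $0$; the verification that $p\in\D$, that the functional equation transfers cleanly to $\tilde f$, and that coefficient matching is legitimate are all routine and I would carry them out without further comment.
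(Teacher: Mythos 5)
Your proof is correct and follows essentially the same route as the paper, which simply notes compactness via Corollary \ref{cor:compact} and defers to the fixed-point/coefficient-growth argument of \cite[Proposition 3]{FatehiHammond:2020}. You have written out in full exactly that argument (linearize at the interior fixed point $p=b/(1-a)$, derive the recursion $(k+1)a^k d_{k+1}=\lambda d_k$, and show the resulting coefficients destroy the radius of convergence unless $f\equiv 0$), and all the steps check out.
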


We now turn our attention to the spectrum of $\CDO$ with symbols of the form $\varphi(z) = az^M$, for $0 < |a| < 1$ and $M\in\N$.  The previous theorem deals with the case $M=1$ and we are left to investigate the spectrum when $M \geq 2$.  By the Spectral Theorem for Compact Operators (see \cite{Conway:1990}), the spectrum is countable, contains 0, and any non-zero element is an eigenvalue. If we assume the existence of a non-zero eigenvalue $\lambda$ of $D_\varphi$, then any associated eigenfunction $f$ satisfies \[\frac{d^n}{dz^n}\left[\lambda f(z)\right] = \frac{d^n}{dz^n}\left[f'(\varphi(z))\right]\] for all $z$ in $\D$ and $n \geq 0$.

We will compute high-order derivatives of the function $f'\circ\varphi$ utilizing Fa\`a di Bruno's Formula (see \cite[Theorem C of Section 3.4]{Comtet:1974})
\begin{equation}\label{Equation:FaadiBruno}\frac{d^n}{dz^n} f'(\varphi(z)) = \sum_{k=1}^n f^{(k+1)}(\varphi(z))B_{n,k}\left(\varphi'(z),\dots,\varphi^{(n-k+1)}(z)\right),\end{equation} where $B_{n,k}(x_1,\dots,x_{n-k+1})$ is the Bell polynomial (see \cite[Theorem A of Section 3.3]{Comtet:1974}) defined as 
\begin{equation}\label{Equation:BellPolynomial}B_{n,k}(x_1,\dots,x_{n-k+1}) = \sum\frac{n!}{c_1!c_2!\cdots c_{n-k+1}!}\left(\frac{x_1}{1!}\right)^{c_1}\cdots\left(\frac{x_{n-k+1}}{(n-k+1)!}\right)^{c_{n-k+1}}
\end{equation} The summation in \myeqref{Equation:BellPolynomial} is taken over all sequences $c_1, \dots, c_{n-k+1}$ of non-negative integers that satisfy
\begin{equation}\label{Equation:SequenceRelations}
\begin{cases}
c_1 + c_2 + \cdots + c_{n-k+1} = k\\
1c_1 + 2c_2 + \cdots + (n-k+1)c_{n-k+1} = n.
\end{cases}
\end{equation}
Of particular interest in this section is the relation
\[\frac{d^n}{dz^n}\left[\lambda f(z)\right]{\bigg\rvert}_{z = 0} = \frac{d^n}{dz^n}\left[f'(\varphi(z))\right]{\bigg\rvert}_{z = 0}.\]  Combining this with \myeqref{Equation:FaadiBruno} and the fact that $\varphi(0)=0$ yields that an eigenfunction $f$ associated with a non-zero eigenvalue $\lambda$ must satisfy
\begin{equation}\label{Equation:a0Relation}
f(0) = \frac{1}{\lambda}f'(0)
\end{equation}
and
\begin{equation}\label{Equation:1stanRelation}
f^{(n)}(0) = \frac{1}{\lambda}\sum_{k=1}^n f^{(k+1)}(0)B_{n,k}\left(\varphi'(0),\dots,\varphi^{(n-k+1)}(0)\right)
\end{equation} for all $n$ in $\N$.

Since all but one derivative of $\varphi$ will be zero at $z=0$, many of the Bell Polynomials present in \eqref{Equation:anRelation} will have many, if not all, the $x_j$ terms being 0.  Those Bell polynomials used in this section are collected here for convenience. 

\begin{lemma}\label{Lemma:BellPolynomialValues}
If $j,k,n$ in $\N$ with $n > 1$, $k \leq n$, and $1 < j < n-k+1$, then 
\begin{enumerate}
\item $B_{n,k}(0,\dots,0) = 0$,
\item $B_{n,1}(0,\dots,0,x_n) = x_n$,
\item $B_{n,1}(0,\dots,0,x_j,0,\dots,0) = 0$,
\item $B_{n,n-1}(0,x_2) = 0$ for $n \neq 2$.
\end{enumerate}
\end{lemma}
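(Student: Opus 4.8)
The plan is to read every assertion directly off the defining sum in \eqref{Equation:BellPolynomial}, whose terms are indexed by the nonnegative integer sequences $(c_1,\dots,c_{n-k+1})$ satisfying the two constraints in \eqref{Equation:SequenceRelations}. The single organizing observation is this: since each term carries the factor $\prod_{i} (x_i/i!)^{c_i}$, a term can be nonzero only when $c_i = 0$ for every index $i$ at which the corresponding argument $x_i$ has been set to $0$ (using the convention $0^0 = 1$). So in each part I would first solve the linear system $\sum_i c_i = k$, $\sum_i i\,c_i = n$ to determine which sequences are admissible, and then check whether any admissible sequence is supported entirely on the nonzero arguments.

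For part (i), every admissible sequence has $\sum_i c_i = k \geq 1$, so at least one $c_i$ is positive; since all arguments equal $0$, the matching factor $0^{c_i}$ annihilates the term, and $B_{n,k}(0,\dots,0) = 0$. For parts (ii) and (iii), $k = 1$ forces $\sum_i c_i = 1$, so exactly one $c_m$ equals $1$ and the rest vanish; the second constraint then reads $m = n$, so the only admissible sequence is $c_n = 1$. Its coefficient is $n!$ and its monomial is $x_n/n!$, giving $B_{n,1}(0,\dots,0,x_n) = x_n$ for part (ii). For part (iii) this same surviving sequence requires the argument $x_n$, but there only $x_j$ with $j < n$ is nonzero, so the unique term again vanishes and $B_{n,1}(0,\dots,0,x_j,0,\dots,0) = 0$.

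For part (iv) I would specialize to $k = n-1$, where $n-k+1 = 2$ and the only arguments are $x_1, x_2$. The constraints $c_1 + c_2 = n-1$ and $c_1 + 2c_2 = n$ have the unique solution $c_2 = 1$, $c_1 = n-2$. When $n \neq 2$ we have $c_1 = n-2 \geq 1$, so the factor $x_1^{\,n-2}$ is evaluated at $x_1 = 0$ and the unique term vanishes, yielding $B_{n,n-1}(0,x_2) = 0$. The hypothesis $n \neq 2$ is exactly what is needed here: at $n = 2$ one instead has $c_1 = 0$, the factor $x_1^{0} = 1$ survives, and $B_{2,1}(0,x_2) = x_2$, consistent with part (ii).

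No step presents a genuine obstacle; the only points requiring care are the bookkeeping of the index range $1 \leq i \leq n-k+1$, the convention $0^0 = 1$ for the vanishing exponents, and---in part (iv)---confirming that the excluded case $n = 2$ is precisely the boundary at which the surviving exponent $c_1$ drops to $0$ and the term ceases to vanish.
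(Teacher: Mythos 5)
Your proof is correct, and since the paper states this lemma without proof (merely ``collecting'' these Bell polynomial values for convenience), your direct verification from the defining sum \eqref{Equation:BellPolynomial} and the constraints \eqref{Equation:SequenceRelations} is exactly the routine computation the authors intend. All four cases check out, including the boundary analysis in (iv) showing that $n=2$ is precisely where the exponent $c_1 = n-2$ drops to zero and the term survives.
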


\begin{remark}\label{Remark:BellPolynomialReduction}
Since the symbol $\varphi(z) = az^M$ with $M \geq 2$, it is the case that $\varphi^{(\ell)}(0) = 0$ for all $\ell \neq M$.  In particular, $\varphi'(0) = 0$.  From Lemma \ref{Lemma:BellPolynomialValues} we obtain  \[B_{n,n}\left(\varphi'(0),\dots,\varphi^{n-k+1}(0)\right) = B_{n,n}(\varphi'(0)) = B_{n,n}(0) = 0.\] Thus, \myeqref{Equation:1stanRelation} reduces to
\[f'(0) = \frac{1}{\lambda}f''(0)B_{1,1}(\varphi'(0)) = 0\]
and
\begin{equation}\label{Equation:anRelation}
f^{(n)}(0) = \frac{1}{\lambda}\sum_{k=1}^{n-1} f^{(k+1)}(0)B_{n,k}\left(0,\varphi''(0),\dots,\varphi^{(n-k+1)}(0)\right)
\end{equation} for $n > 1$.  From \myeqref{Equation:a0Relation}, it must also hold that $f(0) = 0$ as well.
\end{remark}

We present the main result of this section as Theorem \ref{Theorem:SpectralResults}.  We see that all maps of the form $\varphi(z)=az^M$ induce a quasinilpotent operator $D_\varphi$ on $\D$, except for $az^2$.  

\begin{theorem}\label{Theorem:SpectralResults}
If $\varphi(z) = az^M$, for $0 < |a| < 1$ and $M$ in $\N$, then the spectrum of $D_\varphi$ is given by \[\sigma(D_\varphi) = \begin{cases}\hfil \{0, 2a\} & \text{if $M = 2$}\\
\hfil\{0\} & \text{otherwise.}
\end{cases}
\]
\end{theorem}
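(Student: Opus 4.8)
The plan is to use the recursion already set up in Remark \ref{Remark:BellPolynomialReduction} to pin down exactly which nonzero $\lambda$ can be eigenvalues of $D_\varphi$. By the Spectral Theorem for compact operators, $0\in\sigma(D_\varphi)$ always, and every nonzero spectral value is an eigenvalue; so it suffices to determine the nonzero eigenvalues. Suppose $\lambda\neq 0$ is an eigenvalue with eigenfunction $f(z)=\sum_{n\geq 0}a_nz^n$, so $a_n=f^{(n)}(0)/n!$. Remark \ref{Remark:BellPolynomialReduction} already gives $f(0)=0$ and $f'(0)=0$, i.e. $a_0=a_1=0$. The first step is to translate the relation \eqref{Equation:anRelation} into a recursion on the Taylor coefficients $a_n$ and to exploit the fact that, since $\varphi(z)=az^M$, every Bell polynomial $B_{n,k}(0,\varphi''(0),\dots,\varphi^{(n-k+1)}(0))$ vanishes unless each of its nonzero arguments sits in the $M$-th slot, i.e. unless $n=Mk$ for the sole surviving multi-index $c_M=k$ (all other $c_j=0$). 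Concretely, $\varphi^{(M)}(0)=a\,M!$, and the surviving Bell polynomial is $B_{Mk,k}(0,\dots,0,aM!,0,\dots,0)=\frac{(Mk)!}{k!}\,a^k$. Hence \eqref{Equation:anRelation} collapses to a relation that is nonzero only when the index $n$ is a multiple of $M$.

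The second step is to read off the consequence of this collapse. For $M\geq 3$, the surviving relation forces $f^{(n)}(0)=\frac{1}{\lambda}f^{(k+1)}(0)B_{Mk,k}(\dots)$ with $n=Mk$, and one checks that the index $k+1$ on the right is strictly smaller than $n=Mk$ whenever $k\geq 1$ and $M\geq 3$; combined with $a_0=a_1=0$ and the fact that all non-multiple-of-$M$ coefficients are unconstrained-but-forced-to-zero by the recursion, this drives every coefficient to $0$, so there is no nonzero eigenfunction and $\sigma(D_\varphi)=\{0\}$. For $M=2$ the situation is exactly borderline: $n=2k$ and the right-hand index $k+1$ equals $n/2+1$, and the relation for the smallest nontrivial case $n=2$ (so $k=1$) reads $f''(0)=\frac{1}{\lambda}f''(0)B_{2,1}(0,\varphi''(0))=\frac{1}{\lambda}f''(0)\cdot\varphi''(0)=\frac{1}{\lambda}f''(0)\cdot 2a$. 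If $f''(0)\neq 0$ this forces $\lambda=2a$, identifying the unique candidate nonzero eigenvalue; one then exhibits the eigenfunction explicitly (the coefficient recursion with $\lambda=2a$ and $a_2$ free determines a genuine $\DI$-function, e.g. one should verify the tail decays fast enough for finite Dirichlet norm). This shows $2a$ is an eigenvalue and no other nonzero value is, giving $\sigma(D_\varphi)=\{0,2a\}$ for $M=2$.

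The main obstacle I anticipate is the bookkeeping in the second step: making the vanishing argument for $M\geq 3$ airtight requires showing that the chain of indices generated by repeatedly applying the recursion (each step sending $n=Mk$ to the lower index $k+1$) terminates in the already-known vanishing coefficients $a_0,a_1$ without ever hitting a self-referential loop that could support a nonzero solution. In other words, one must verify that for $M\geq 3$ the map $k\mapsto$ (the index appearing on the right) strictly decreases and cannot return $n$ itself, whereas for $M=2$ it is precisely the possibility of the index $k+1$ coinciding in a self-consistent way (at $n=2$) that creates the lone eigenvalue $2a$. I would organize this as an induction on $n$ showing $a_n=0$ for all $n$ in the $M\geq 3$ case, and a direct solution of the single surviving equation in the $M=2$ case, taking care to confirm at the end that the resulting formal eigenfunction genuinely lies in $\DI$ by estimating its Dirichlet norm.
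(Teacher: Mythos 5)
Your proposal is correct and follows essentially the same route as the paper: both reduce to nonzero eigenvalues via compactness and then use the Fa\`a di Bruno/Bell polynomial recursion for $f^{(n)}(0)$ to force all Taylor coefficients to vanish except in the case $M=2$, $\lambda=2a$ (your observation that only the term with $n=Mk$ survives is just a streamlined packaging of the paper's three lemmas for $n<M$, $n=M$, $n>M$). Your closing worry about verifying that the $M=2$ eigenfunction has finite Dirichlet norm is unnecessary: the same recursion forces every coefficient other than $a_2$ to vanish, so the eigenfunction is a scalar multiple of $z^2$, which the paper simply checks directly.
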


\noindent For the proof of Theorem \ref{Theorem:SpectralResults}, we require the following lemmas.  These results determine the quantities $f^{(n)}(0)$ if $f$ is an eigenfunction associated with a non-zero eigenvalue $\lambda$.  The cases of $n < M$, $n=M$, and $n > M$ are considered separately.

\begin{lemma}\label{Lemma:CoefficientsLessThanM} 
Suppose $\varphi(z) = az^M$, for $0 < |a| < 1$ and $M \geq 2$.  If $\lambda$ is a non-zero eigenvalue of the induced operator $D_\varphi$ with associated eigenfunction $f$, then $f^{(n)}(0) = 0$ for all $n < M$.
\end{lemma}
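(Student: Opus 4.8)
The plan is to argue directly from the reduced recursion \eqref{Equation:anRelation} recorded in Remark \ref{Remark:BellPolynomialReduction}, exploiting the fact that the symbol $\varphi(z)=az^M$ has a single nonvanishing derivative at the origin. First I would dispose of the cases $n=0$ and $n=1$: these are immediate, since the remark already yields $f(0)=0$ and $f'(0)=0$. It then remains to treat the range $2\leq n<M$ (which is vacuous unless $M\geq 3$), and for these values I claim that every term on the right-hand side of \eqref{Equation:anRelation} vanishes, so that $f^{(n)}(0)=0$ falls out at once.

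The key step is to determine which of the Bell polynomials $B_{n,k}\!\left(0,\varphi''(0),\dots,\varphi^{(n-k+1)}(0)\right)$ can be nonzero. Because $\varphi(z)=az^M$, we have $\varphi^{(\ell)}(0)=0$ for every $\ell\neq M$ and $\varphi^{(M)}(0)=a\,M!\neq 0$. In the defining sum \eqref{Equation:BellPolynomial} each summand carries a factor $\left(x_\ell/\ell!\right)^{c_\ell}$, so any summand with $c_\ell>0$ for some $\ell\neq M$ is forced to be zero; the only summand that can survive is the one with $c_\ell=0$ for all $\ell\neq M$. For that summand the constraints \eqref{Equation:SequenceRelations} collapse to $c_M=k$ and $Mc_M=n$, which together require $n=Mk$. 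Hence $B_{n,k}$ can be nonzero only when $M\mid n$.

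Now for $2\leq n<M$ we have $M\nmid n$, so every Bell polynomial appearing in \eqref{Equation:anRelation} vanishes and the recursion reduces to $f^{(n)}(0)=0$. I would emphasize that no inductive hypothesis on lower-order coefficients is required here: the right-hand side is identically zero regardless of the values $f^{(k+1)}(0)$, so the divisibility obstruction alone settles every term. Combining this with the base cases $n=0,1$ gives $f^{(n)}(0)=0$ for all $n<M$, as desired.

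I expect the only subtlety to be the bookkeeping in the second paragraph: handling the convention $0^0=1$ so that the single candidate summand is correctly present precisely when $M\mid n$ and absent (by the constraint equations) when $M\nmid n$, and confirming that $M$ lies in the admissible index range $1\leq M\leq n-k+1$ so that $\varphi^{(M)}(0)$ genuinely appears among the arguments of $B_{n,k}$. Both checks are routine, and since for this lemma $M\nmid n$ already annihilates every term, they serve only to verify that nothing nonzero has been missed.
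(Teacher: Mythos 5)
Your proposal is correct and follows essentially the same route as the paper: dispose of $n\leq 1$ via Remark \ref{Remark:BellPolynomialReduction}, then show that every Bell polynomial appearing in \eqref{Equation:anRelation} vanishes for $1<n<M$. The paper's justification for that vanishing is even more direct than your divisibility criterion: since $1\leq k\leq n-1$ forces $n-k+1\leq n<M$, every argument $\varphi^{(\ell)}(0)$ with $\ell\leq n-k+1$ is literally zero, so $B_{n,k}(0,\dots,0)=0$ by Lemma \ref{Lemma:BellPolynomialValues}; your observation that $B_{n,k}$ can survive only when $M\mid n$ is valid and more general, but is not needed in this range.
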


\begin{proof}
Note that Remark \ref{Remark:BellPolynomialReduction} shows the lemma to be true for the case $M=2$.  For $M > 2$, the same remark shows $f^{(n)}(0) = 0$ for $n \leq 1$.  Thus we consider the situation that $M > 2$ and $1 < n < M$.

Observe that for $k$ in $\N$ with $1 \leq k \leq n-1$ it follows that $2 \leq n-k+1 \leq n < M$.  Thus $\varphi^{(\ell)}(0) = 0$ for all $1 \leq \ell \leq n-k+1$.  So \myeqref{Equation:anRelation}, in conjunction with Lemma \ref{Lemma:BellPolynomialValues} yields
\[\begin{aligned}
f^{(n)}(0) &= \frac{1}{\lambda}\sum_{k=1}^{n-1}f^{(k+1)}(0)B_{n,k}\left(0,\varphi''(0),\dots,\varphi^{(n-k+1)}(0)\right)\\
&= \frac{1}{\lambda}\sum_{k=1}^{n-1}f^{(k+1)}(0)B_{n,k}(0,\dots,0)\\
&= 0,
\end{aligned}\] as desired.
\end{proof}

\begin{lemma}\label{Lemma:CoefficientEqualToM}
Suppose $\varphi(z) = az^M$, for $0 < |a| < 1$ and $M \geq 2$.  Furthermore, suppose $\lambda$ is a non-zero eigenvalue of the induced operator $D_\varphi$ with associated eigenfunction $f$.  Then $f^{(M)}(0) = 0$ unless $M=2$ and $\lambda=2a$.
\end{lemma}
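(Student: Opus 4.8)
The plan is to specialize the recursion \eqref{Equation:anRelation} to the index $n=M$ and show that, once we account for which Bell polynomials vanish, the quantity $f^{(M)}(0)$ is forced to equal a single explicit multiple of $f''(0)$; the claimed dichotomy then drops out of a short case split on $M$.

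First I would set $n=M$ in \eqref{Equation:anRelation}, obtaining
\[f^{(M)}(0) = \frac{1}{\lambda}\sum_{k=1}^{M-1} f^{(k+1)}(0)B_{M,k}\left(\varphi'(0),\dots,\varphi^{(M-k+1)}(0)\right).\]
Because $\varphi(z)=az^M$, the only nonzero derivative of $\varphi$ at $0$ is $\varphi^{(M)}(0)=aM!$, and every lower-order derivative vanishes there (cf. Remark \ref{Remark:BellPolynomialReduction}). The argument list of $B_{M,k}$ is $\varphi'(0),\dots,\varphi^{(M-k+1)}(0)$, so the single nonzero entry $\varphi^{(M)}(0)$ can appear only when $M-k+1\geq M$, that is, only for $k=1$. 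For every $k$ with $2\leq k\leq M-1$ the list consists entirely of derivatives of order strictly less than $M$, hence is all zeros, so $B_{M,k}(0,\dots,0)=0$ by Lemma \ref{Lemma:BellPolynomialValues}(i). For $k=1$, Lemma \ref{Lemma:BellPolynomialValues}(ii) gives $B_{M,1}(0,\dots,0,\varphi^{(M)}(0))=\varphi^{(M)}(0)=aM!$.

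With these observations the entire sum collapses to its $k=1$ term, leaving the single relation
\[f^{(M)}(0)=\frac{aM!}{\lambda}\,f''(0).\]
It then remains to split into cases according to the size of $M$. If $M\geq 3$, then $2<M$, so Lemma \ref{Lemma:CoefficientsLessThanM} yields $f''(0)=0$ and hence $f^{(M)}(0)=0$. If $M=2$, then $f^{(M)}(0)=f''(0)$ and the displayed relation reads $f''(0)=(2a/\lambda)f''(0)$, equivalently $f''(0)(\lambda-2a)=0$; thus $f^{(M)}(0)=f''(0)=0$ unless $\lambda=2a$, which is exactly the asserted exception.

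The computation is short, and I do not anticipate any genuine obstacle: the only care required is the bookkeeping that isolates the $k=1$ term, and the vanishing statements collected in Lemma \ref{Lemma:BellPolynomialValues} together with the prior vanishing of the low-order coefficients in Lemma \ref{Lemma:CoefficientsLessThanM} do all of the work.
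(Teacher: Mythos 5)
Your proposal is correct and follows essentially the same route as the paper: specialize \eqref{Equation:anRelation} at $n=M$, use Lemma \ref{Lemma:BellPolynomialValues} to kill every term except $k=1$, arrive at $f^{(M)}(0)=\frac{(M!)a}{\lambda}f''(0)$, and conclude by the same case split on $M$ using Lemma \ref{Lemma:CoefficientsLessThanM}. No substantive differences.
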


\begin{proof}
First note that $\varphi^{(\ell)}(0) = 0$ for all $\ell \neq M$ and $\varphi^{(M)}(0) = (M!)a$.  Suppose $k$ in $\N$ such that $2 \leq k \leq M-1$.  Observe that $2 \leq M-k+1 \leq M-1$.  Thus \myeqref{Equation:anRelation} reduces to 
\begin{equation}\label{Equation:MDerivative}
\begin{aligned}
f^{(M)}(0) &= \frac{1}{\lambda}\sum_{k=1}^{M-1} f^{(k+1)}(0)B_{M,k}\left(0,\varphi''(0),\dots,\varphi^{(M-k+1)}(0)\right)\\
&= \frac{1}{\lambda}f''(0)B_{M,1}\left(0,\varphi''(0),\dots,\varphi^{(M)}(0)\right)\\
&\qquad + \frac{1}{\lambda}\sum_{k=2}^{M-1} f^{(k+1)}(0)B_{M,k}\left(0,\varphi''(0),\dots,\varphi^{(M-k+1)}(0)\right)\\
&= \frac{B_{M,1}\left(0,\dots,0,(M!)a\right)}{\lambda}f''(0) + \frac{1}{\lambda}\sum_{k=2}^{M-1} f^{(k+1)}(0)B_{M,k}(0,\dots,0)\\
&= \frac{(M!)a}{\lambda}f''(0).
\end{aligned}
\end{equation} Suppose $M = 2$.  Then \myeqref{Equation:MDerivative} becomes $f''(0) = \frac{2a}{\lambda}f''(0)$.  So it must be that $f''(0) = 0$ unless $\lambda = 2a$.  Finally, suppose $M > 2$.  Then $f''(0) = 0$ by Lemma \ref{Lemma:CoefficientsLessThanM}.  So \myeqref{Equation:MDerivative} reduces to $f^{(M)}(0) = 0$.
\end{proof}

\begin{lemma}\label{Lemma:CoefficientsGreaterThanM} 
Suppose $\varphi(z) = az^M$ for $0 < |a| < 1$ and $M \geq 2$.  If $\lambda$ is a non-zero eigenvalue of the induced operator $D_\varphi$ with associated eigenfunction $f$, then $f^{(n)}(0) = 0$ for all $n > M$.
\end{lemma}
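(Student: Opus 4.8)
The plan is to induct on $n > M$, feeding on the reduced recursion \myeqref{Equation:anRelation} together with the special structure of the Bell polynomials forced by $\varphi(z) = az^M$. The starting observation is that since $\varphi^{(\ell)}(0) = 0$ for every $\ell \neq M$ and $\varphi^{(M)}(0) = (M!)a$, the only nonzero argument of each $B_{n,k}$ appearing in \myeqref{Equation:anRelation} is the one in slot $M$. Inspecting the constraint equations \myeqref{Equation:SequenceRelations}, any surviving term in the defining sum \myeqref{Equation:BellPolynomial} must have $c_\ell = 0$ for all $\ell \neq M$, whence $c_M = k$ and $Mc_M = n$. Consequently $B_{n,k}\left(0,\varphi''(0),\dots,\varphi^{(n-k+1)}(0)\right) = 0$ unless $n = Mk$, in which case it equals $\frac{(Mk)!}{k!}a^k$.

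First I would use this to collapse \myeqref{Equation:anRelation}. If $M \nmid n$, then every Bell polynomial in the sum vanishes and we obtain $f^{(n)}(0) = 0$ outright. If $M \mid n$, only the single index $k = n/M$ survives; since $n > M \geq 2$ we have $k \geq 2$, and one checks $1 \leq k \leq n-1$ so this index genuinely lies in the range of summation. Thus \myeqref{Equation:anRelation} reduces to
\[ f^{(n)}(0) = \frac{1}{\lambda}\,\frac{(Mk)!}{k!}\,a^k\, f^{(k+1)}(0), \qquad k = n/M. \]
The key point that makes the induction well-founded is that $k + 1 = n/M + 1 < n$ whenever $n > M \geq 2$, so $f^{(n)}(0)$ is expressed purely in terms of a strictly lower-order derivative.

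Next I would run strong induction on $n > M$. The base case $n = M+1$ is immediate, since $M \nmid M+1$ forces $f^{(M+1)}(0) = 0$. For the inductive step in the case $M \mid n$ (the case $M \nmid n$ being automatic), I would locate $k+1$ relative to $M$: if $k+1 < M$ then $f^{(k+1)}(0) = 0$ by Lemma \ref{Lemma:CoefficientsLessThanM}; if $k+1 = M$ then $k \geq 2$ forces $M \geq 3$, which excludes the exceptional case of Lemma \ref{Lemma:CoefficientEqualToM} and gives $f^{(M)}(0) = 0$; and if $k+1 > M$ then $M < k+1 < n$, so the inductive hypothesis applies. In every case $f^{(k+1)}(0) = 0$, and hence $f^{(n)}(0) = 0$, completing the induction.

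I expect the only real obstacle to be the bookkeeping around the index $k+1$: verifying $k+1 < n$ so that the recursion genuinely descends, and correctly disposing of the boundary case $k+1 = M$. The latter is precisely where the $M = 2$, $\lambda = 2a$ exception of Lemma \ref{Lemma:CoefficientEqualToM} would otherwise intrude, but it is ruled out because $k \geq 2$ forces $M \geq 3$ there; this is also why the possibly nonzero value of $f^{(2)}(0)$ in the exceptional case never propagates, as $k+1 \geq 3$ always. A secondary routine check is confirming the surviving index $k = n/M$ satisfies $1 \leq k \leq n-1$.
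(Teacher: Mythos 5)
Your argument is correct, and it follows the same basic skeleton as the paper's proof (induction on $n$ via the reduced Fa\`a di Bruno recursion \eqref{Equation:anRelation}), but the execution is genuinely different in one respect: you prove a sharper fact about the Bell polynomials than the paper uses. The paper's Lemma \ref{Lemma:BellPolynomialValues} only records that $B_{n,k}$ vanishes in a few specific configurations (all arguments zero, $k=1$ with the nonzero slot in an interior position, $k=n-1$ with $n\neq 2$), and consequently the paper must invoke the strong inductive hypothesis to kill the middle terms $f^{(k+1)}(0)B_{n,k}$ for $2\le k\le n-2$ --- including the one index where $B_{n,k}$ is actually nonzero --- and must split into the cases $M=2$ and $M>2$. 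You instead read off from the constraints \eqref{Equation:SequenceRelations} that with only slot $M$ nonzero, $B_{n,k}$ survives precisely when $n=Mk$, with value $\tfrac{(Mk)!}{k!}a^k$. This buys you a one-term recursion $f^{(n)}(0)=\tfrac{1}{\lambda}\tfrac{n!}{k!}a^k f^{(n/M+1)}(0)$, makes the case $M\nmid n$ immediate with no induction at all, eliminates the $M=2$ versus $M>2$ dichotomy, and isolates cleanly why the exceptional value $f''(0)$ from Lemma \ref{Lemma:CoefficientEqualToM} never re-enters (the surviving index satisfies $k=n/M\ge 2$, so $k+1\ge 3$). Your bookkeeping is all in order: $k=n/M$ lies in the summation range $1\le k\le n-1$, the slot $M$ is genuinely among the arguments of $B_{n,k}$ when $n=Mk$, the recursion descends since $n/M+1<n$ for $n>M\ge 2$, and the boundary case $k+1=M$ forces $M\ge 3$, which excludes the exception in Lemma \ref{Lemma:CoefficientEqualToM}.
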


\begin{proof}
We will prove this result by induction on $n$.  First, we will show $f^{(M+1)}(0) = 0$.  We will consider the cases of $M=2$ and $M > 2$ separately.  
\begin{case}{1}
Suppose $M=2$.  We can write \myeqref{Equation:anRelation} as 
\[\begin{aligned}
f'''(0) &= \frac{1}{\lambda}f''(0)B_{3,1}\left(0,\varphi''(0),\varphi'''(0)\right) + \frac{1}{\lambda}f'''(0)B_{3,2}\left(0,\varphi''(0)\right)\\
&= \frac{1}{\lambda}f''(0)B_{3,1}(0,2a,0) + \frac{1}{\lambda}f'''(0)B_{3,2}(0,2a).
\end{aligned}\] From Lemma \ref{Lemma:BellPolynomialValues} it follows that $B_{3,1}(0,2a,0) = 0 = B_{3,2}(0,2a)$.  Thus when $M=2$, $f^{(M+1)}(0) = 0$.

Now, suppose $f^{(3)}(0) = \cdots = f^{(n-1)}(0) = 0$ for some $n \geq 4$.  We will now show $f^{(n)}(0) = 0$.  We write \myeqref{Equation:anRelation} as
\[\begin{aligned}
f^{(n)}(0) &= \frac{1}{\lambda}\sum_{k=1}^{n-1}f^{(k+1)}(0)B_{n,k}\left(0,\varphi''(0),\dots,\varphi^{(n-k+1)}(0)\right)\\
&= \frac{1}{\lambda}f''(0)B_{n,1}(0,2a,0,\dots,0) + \frac{1}{\lambda}f^{(n)}(0)B_{n,n-1}(0,2a)\\
&\qquad + \sum_{k=2}^{n-2}f^{(k+1)}(0)B_{n,k}(0,2a,0\dots,0).
\end{aligned}\] For $2 \leq k \leq n-2$, we have $3 \leq k+1 \leq n-1$.  So $f^{(k+1)}(0) = 0$ by the inductive hypothesis.  From Lemma \ref{Lemma:BellPolynomialValues}, $B_{n,1}(0,2a,0,\dots,0) = 0$ and  $B_{n,n-1}(0,2a) = 0$ since $n > 1$.  Thus $f^{(n)}(0) = 0$.
\end{case}

\begin{case}{2}
Suppose $M>2$.  We will first show $f^{(M+1)}(0) = 0$.  In this case, \myeqref{Equation:anRelation} becomes
\[\begin{aligned}
f^{(M+1)}(0) &= \frac{1}{\lambda}\sum_{k=1}^{M-1}f^{(k+1)}(0)B_{M+1,k}\left(0,\varphi''(0),\dots,\varphi^{(M-k+2)}(0)\right) + \frac{1}{\lambda}f^{(M+1)}(0)B_{M+1,M}(0,0).
\end{aligned}\] For $1\leq k\leq M-1$ it follows that $f^{(k+1)}(0) = 0$ by Lemmas \ref{Lemma:CoefficientsLessThanM} and \ref{Lemma:CoefficientEqualToM} since $2 \leq k+1 \leq M$.  From Lemma \ref{Lemma:BellPolynomialValues} we have $B_{M+1,M}(0,0) = 0$.  Thus $f^{(M+1)}(0) = 0$.

Finally, suppose $f''(0) = \cdots = f^{(n-1)}(0) = 0$ for some $n \geq M+2$.  We will show $f^{(n)}(0) = 0$.  By writing \myeqref{Equation:anRelation} as
\[\begin{aligned}
f^{(n)}(0) &= \frac{1}{\lambda}\sum_{k=1}^{n-1} f^{(k+1)}(0)B_{n,k}\left(0,\varphi'(0),\dots,\varphi^{(n-k+1)}(0)\right)\\
&= \frac{1}{\lambda}\sum_{k=1}^{n-2}f^{(k+1)}(0)B_{n,k}\left(0,\varphi''(0),\dots,\varphi^{(n-k+1)}(0)\right) + \frac{1}{\lambda}f^{(n)}(0)B_{n,n-1}(0,\varphi''(0))\\
&= \frac{1}{\lambda}f^{(n)}(0)B_{n,n-1}(0,0)\\
&= 0.
\end{aligned}\]
\end{case}
\noindent In either case, $f^{(n)}(0) = 0$ for all $n > M$.
\end{proof}

\noindent We can now prove Theorem \ref{Theorem:SpectralResults} as follows.  

\begin{proof}[Proof of Theorem \ref{Theorem:SpectralResults}]
Let $\lambda$ be a non-zero eigenvalue of $D_\varphi$.  As Theorem \ref{Theorem:SpectrumLinear} shows, the conclusion holds for the case of $M=1$. We proceed by considering the following two cases.
\begin{case}{1}
Suppose $M=2$.  Note the function $g(z) = z^2$ is a non-zero function in $\mathcal{D}$.  For each $z$ in $\D$, we see  
\[(D_\varphi g)(z) = g'(\varphi(z)) = 2\varphi(z) = 2(az^2) = 2ag(z).\] Thus $2a$ is an eigenvalue of $D_\varphi$ with associated eigenfunction $g$. So $\{0,2a\} \subseteq \sigma(D_\varphi)$.  Finally, assume, for purposes of contradiction, that $\lambda$ is a non-zero eigenvalue of $D_\varphi$ that is not $2a$, with associated eigenfunction $f$.  Then it follows from Lemmas \ref{Lemma:CoefficientsLessThanM}, \ref{Lemma:CoefficientEqualToM}, and \ref{Lemma:CoefficientsGreaterThanM} that $f^{(n)}(0) = 0$ for all $n \geq 0$.  So $f$ is identically 0, a contradiction.  Thus $\sigma(D_\varphi) = \{0,2a\}$.
\end{case}
\begin{case}{2}
Suppose $M > 2$.  Then it follows directly from Lemmas \ref{Lemma:CoefficientsLessThanM}, \ref{Lemma:CoefficientEqualToM}, and \ref{Lemma:CoefficientsGreaterThanM} that $f^{(n)}(0) = 0$ for all $n \geq 0$.  So $f$ is identically 0, a contradiction.  Thus $\sigma(D_\varphi) = \{0\}$.
\end{case}
\noindent Therefore, the spectrum of $D_\varphi$ has been established, as desired.
\end{proof}

\begin{remark}
The arguments made in this section apply to a functional Banach space $\mathcal{X}$ that contains the polynomials for which $\varphi(z) = az^M$, $0 < |a| < 1$ and $M$ in $\N$, induces a compact operator $D_\varphi$.  As an example, these results apply to $D_\varphi$ acting on $H^2(\D)$ and thus adds to the results in Section 2 of \cite{FatehiHammond:2020}. Additionally, these results hold for $D_\varphi$ acting on the Bloch space, adding to the results of \cite{Ohno:2009}.
\end{remark}

\begin{remark}
At the end of \cite{FatehiHammond:2020}, the authors pose the question: 

\centerline{``Is $D_\varphi$ [acting on $H^2$] quasinilpotent whenever $\varphi$ is univalent and $\|\varphi\|_\infty < 1$?"} 

\noindent While Theorem \ref{Theorem:SpectralResults} does not answer this question, in light of the previous remark it does show that the symbols of quasinilpotent $D_\varphi$ acting on $H^2$ or $\mathcal{D}$ need not be univalent. It also shows that $\|\Ph\|_{\infty}<1$ alone is not enough to guarantee that the operator is quasinilpotent. We believe this line of inquiry poses an interesting open question.
\end{remark}

\bibliographystyle{amsplain}
\bibliography{references.bib}
\end{document}